\documentclass[11pt]{article}
\usepackage[utf8]{inputenc}
\usepackage[english]{babel}
\usepackage{authblk}
\usepackage{graphicx}        
\usepackage[caption=false]{subfig}
\usepackage{multicol}        
\usepackage[bottom]{footmisc}
\usepackage{placeins}
\usepackage{changes,todonotes}
\usepackage{pgf}
\usepackage{psfrag}
\usepackage{pgfplots}
\usepackage{hyperref}
\usepackage{appendix}
\usepackage{amsmath,amssymb}
\usepackage{bm}
\usepackage{mathtools}
\usepackage{color}
\usepackage{stmaryrd}
\usepackage{amsthm}
\usepackage{blindtext}
\usepackage{caption}
\usepackage{multirow}

\usepackage[margin=3cm]{geometry}

\usepackage[ruled,vlined]{algorithm2e}

\graphicspath{./FIGURES/}



\newcommand{\vertiii}[1]{{\left\vert\kern-0.25ex\left\vert\kern-0.25ex\left\vert #1 
    \right\vert\kern-0.25ex\right\vert\kern-0.25ex\right\vert}}



\newcommand{\trinorm}[1]{{\vert\kern-0.25ex\vert\kern-0.25ex\vert #1 \vert\kern-0.25ex\vert\kern-0.25ex\vert}}


\definecolor{farbe}{gray}{0.80}


\newtheorem{theorem}{Theorem}
\newtheorem{proposition}{Proposition}
\newtheorem{definition}{Definition}
\newtheorem{lemma}{Lemma}

\newtheorem{assumption}{Assumption}
\newtheorem{remark}{Remark}

\begin{document}

\title{A polytopal discontinuous Galerkin method for the pseudo-stress formulation of the unsteady Stokes problem}

\author[$\star$]{Paola F. Antonietti}
\author[$\star$]{Michele Botti}
\author[$\star$]{Alessandra Cancrini}
\author[$\star$]{Ilario Mazzieri}

\affil[$\star$]{MOX, Laboratory for Modeling and Scientific Computing, Dipartimento di Matematica, Politecnico di Milano, Piazza Leonardo da Vinci 32, I-20133 Milano, Italy}

\affil[ ]{\texttt {\{paola.antonietti,alessandra.cancrini\\ michele.botti,ilario.mazzieri\}@polimi.it}}

\maketitle

\noindent{\bf Keywords }: Stokes problem, pseudo-stress formulation, discontinuous Galerkin method, polygonal and polyhedral meshes, stability and convergence analysis
	
\begin{abstract}
This work aims to construct and analyze a discontinuous Galerkin method on polytopal grids (PolydG) to solve the pseudo-stress formulation of the unsteady Stokes problem. The pseudo-stress variable is introduced due to the growing interest in non-Newtonian flows and coupled interface problems, where stress assumes a fundamental role.
The space-time discretization of the problem is achieved by combining the PolydG approach with the implicit $\theta$-method time integration scheme. For both the semi- and fully-discrete problems we present a detailed stability analysis. Moreover, we derive convergence estimates for the fully discrete space-time discretization. A set of verification tests is presented to verify the theoretical estimates and the application of the method to cases of engineering interest.
\end{abstract}

\section{Introduction}\label{intro}
The interaction between a fluid and a poroelastic structure is a complex problem that couples the Stokes equations with the poroelasticity system describing the motion of flows in deformable saturated porous mediums (see, e.g. \cite{Cesmelioglu2017,Showalter2005}).
Numerical modeling of this problem finds important applications, for example, in the simulation of the groundwater flow in fractured aquifers, oil and gas extraction, and biological flows.
In this work, we will focus on the Stokes problem, which models incompressible viscous free flows, formulating it in the pseudo-stress unknown rather than in its classical expression. 
Due to the recent growing interest in non-Newtonian fluid flow models (see e.g. \cite{Baaijens1998,Keunings2001}), which are crucial for the understanding of real fluids and have significant applications in the biological, medical, and industrial fields, the stress-velocity-pressure formulation for incompressible flows has garnered attention (see e.g. \cite{Gerritsma1999, Howell2009, Qiu.Zhao2024}). 
Indeed, for complex nonlinear flow problems, the use of a formulation where stress serves as a primal unknown can facilitate the design of approximation methods and its numerical solution \cite{Manouzi2001}.
In addition, an accurate approximation of the stress is crucial for determining traction on a fluid-solid interface.
Although stress can be a posteriori reconstructed in the velocity-pressure formulation through velocity differentiation, this compromises the accuracy. Note that a drawback associated with employing the stress-velocity-pressure formulation is the additional challenge introduced by the symmetry constraint of the stress tensor during the discretization process \cite{Arnold1984,Arnold2002}. 
One possible approach to overcome such a difficulty is based on employing the concept of pseudo-stress \cite{CaiLee2004}. The pseudostress-velocity formulation for the time-dependent Stokes problem has been suggested in \cite{Cai2007}, where only the implicit time discretization is mentioned, without providing the weak formulation and the theoretical analysis of the problem.

The pseudo-stress variable is defined as $\boldsymbol{\sigma}(\boldsymbol{u},p) = \mu\boldsymbol{\nabla}\boldsymbol{u} - p\mathbb{I}_d$, where $\boldsymbol{u}$ is the flow velocity and $p$ its pressure. The pseudo-stress, being nonsymmetric, allows for the adaptation of stable pairs designed for Darcy flows to the pseudostress-velocity formulation of the Stokes' equations \cite{Cai2005}. 
For this reason, in this paper, we formulate the unsteady Stokes problem as a single equation in the pseudo-stress variable.
For the resulting problem, we propose and analyze a discontinuous
Galerkin method on polytopal grids (PolydG). For previous results in the field of discontinuous Galerkin methods on polytopal grids, see e.g. \cite{Bassi2012,Antonietti2013,Cangiani2014,Cangiani2022,Cangiani2016,Congreve2019} addressing second-order elliptic problems, \cite{Cangiani2017} focusing on parabolic differential equations, \cite{Antonietti2019} modeling flows in fractured porous media, and \cite{AntoniettiVerani2019} addressing fluid-structure interaction problems. Additionally, we refer to \cite{CangianiDongGeorgoulisHouston_2017} for a comprehensive monograph. 
More recent dG discretizations on polytopal meshes can be found in \cite{AntoniettiMazzieri2018} for elastodynamics problems, in \cite{AntoniettiMazzieri2020} for nonlinear sound waves, in \cite{Antonietti2020_CMAME,AntoniettiBonaldi2020} for coupled elasto-acoustic problems, in \cite{Antonietti2023_SISC,bonetti2023numerical} for  thermo-elasticity, 
in \cite{Antonietti2021,Antonietti2022_VJM}  for poroelasto-acustics, and in  \cite{Corti2023_M3AS,Corti2023,fumagalli2023polytopal} for multiphysics brain modeling. Moreover, dG methods for a pure-stress formulation of the elasticity eigenproblem are proposed in \cite{Meddahi2023,Meddahi2019,Hong2021}, while in \cite{Lepe2020} dG methods for a pseudo-stress formulation of the Stokes eigenvalue problem are presented.

This work introduces a new contribution by presenting a comprehensive analysis of the proposed PolydG approximation of the Stokes problem in its pseudo-stress formulation. We first provide rigorous proof of the well-posedness and stability of the pseudo-stress weak formulation of the continuous problem, which is both novel and original. 
We then design and analyze both the semi-discrete and fully-discrete formulations based on the PolydG spatial discretization and the $\theta$-method time integration, carrying out a detailed stability analysis for both, and obtaining \emph{a priori} estimates. Finally, we present a convergence analysis for the fully-discrete problem, establishing error estimates in a suitable discrete norm.

The structure of the paper is as follows. In Section~\ref{sec:stokes_problem} we present the unsteady Stokes model problem in the pseudo-stress formulation, discuss the analogy with its classical velocity-pressure formulation, and prove the well-posedness of the weak formulation.  
In Section~\ref{sec:dG_discretization}, we introduce the semi-discrete setting and present the PolydG discretization of our problem. Next, we prove the stability of the discrete solution in the semi-discrete and fully-discrete setting, the latter obtained by coupling the Crank-Nicolson time integration scheme with the PolydG space discretization. Finally, in Section~\ref{sec:convergence} we prove a convergence result for the fully discrete problem. 
Numerical tests that confirm the theoretical estimates are reported in Section~\ref{sec:numerical_test} together with an application of engineering interest, namely the flow around a cylinder. Finally, in Section~\ref{sec:conclusions} we draw some conclusions and an outlook of possible extension of this work.

\section{The model problem}\label{sec:stokes_problem}

\subsection{Notation}
Let $\Omega\subset\mathbb{R}^d$, $d=2,3$, be an open, convex polygonal/polyhedral domain with Lipschitz boundary $\partial\Omega$.
In what follows, for $X \subseteq \overline{\Omega}$, the notation $\bm{L}^2(X)$ is adopted in place of $[L^2(X)]^d$ and $[L^2(X)]^{d\times d}$, $d=2,3$. The scalar product in $L^2(X)$ is denoted by $(\cdot,\cdot)_X$, with the associated norm $\| \cdot \|_X$.  
Similarly, the Sobolev spaces $\bm{H}^\ell(X)$ are defined as $[H^\ell(X)]^d$, with $\ell\geq 0$, equipped with the norm $\| \cdot \|_{\ell,X}$, assuming conventionally that $\bm{H}^0(X)\equiv\bm{L}^2(X)$. 
In addition, we will use $H(\textrm{div},X)$ to denote the space of $\bm{L}^2(X)$ vector fields with square-integrable divergence. Then, the notation $\bm{H}(\textrm{div},X)$ is used for the space of tensor fields with rows belonging to $H(\textrm{div},X)$, that is, $\bm{H}(\textrm{div},X) = [H(\textrm{div},X)]^d$, equipped with the norm 
\begin{align*}
\left\|\boldsymbol{\sigma}\right\|^2_{\textrm{div}, X} = \left\|\boldsymbol{\sigma}\right\|^2_{\bm L^{2}(X)} + \left\|\boldsymbol{\nabla}\cdot\boldsymbol{\sigma}\right\|^2_{\bm L^{2}(X)} \ \ \forall \bm \sigma \in \bm H({\rm div}, X).
\end{align*} 
Finally, for $\Gamma\subset\partial\Omega$, we consider the space $H^{\frac12}(\Gamma) = \{ v \in L^2(\Gamma)\, |\, \exists u\in H^1(\Omega): u_{|\Gamma} = v \}$ and its dual space $H^{-\frac{1}{2}}(\Gamma)$. 
The duality product between two functions $v\in H^{\frac12}(\Gamma)$ and $w\in H^{-\frac12}(\Gamma)$ is denoted by $\langle v,w \rangle_{\Gamma}$.

For a given final time $T>0$, $k\in\mathbb{N}$, and a Hilbert space $H$, the usual notation $C^k([0,T];H)$ is adopted for the space of $H$-valued functions, $k$-times continuously differentiable in $[0,T]$. For
the sake of readability, we will use the notation $\boldsymbol{\dot \sigma}$ to denote the time derivative
of $\boldsymbol{ \sigma}$.
The notation $x\lesssim y$ stands for $x\leq C y$, with $C>0$, independent of the discretization parameters, but possibly dependent on the physical coefficients, the final time $T$, the domain $\Omega$, and the dimension $d$.

\subsection{Unsteady Stokes problem}\label{sec-sub:stokes_pb}
We start from the unsteady Stokes problem written in the classical velocity-pressure formulation: Find $(\boldsymbol{u}, p)$ such that

\begin{equation}\label{eq:stokes_primal}
\begin{cases}
  \dfrac{\partial \boldsymbol{u}}{\partial t} 
  -\mu\boldsymbol{\Delta} \boldsymbol{u} + \boldsymbol{\nabla} p = \boldsymbol{f}, 
  \qquad&\text{in }\Omega\times(0,T], \\
  \boldsymbol{\nabla} \cdot \boldsymbol{u} = 0, 
  \qquad &\text{in }\Omega\times(0,T], \\
  \boldsymbol{u} = \boldsymbol{u}_D, \;\; &{\rm on} \ \Gamma_D\times(0,T], \\
  (\mu\boldsymbol{\nabla} \boldsymbol{u} - p)\boldsymbol{n} = \boldsymbol{g}_N,\;\; 
  &{\rm on}\ \Gamma_N\times(0,T], \\
  \boldsymbol{u}(\cdot,t=0) = \boldsymbol{u}_0, &\text{in }\Omega, 
\end{cases}    
\end{equation}
\\
where $\boldsymbol{u}$ is the flow velocity, $p$ is the fluid pressure, $\mu>0$ is the fluid viscosity and $T>0$ is the final simulation time. The boundary of $\Omega$ is partitioned as $ \Gamma_D\cup \Gamma_N = \partial\Omega$, with $\Gamma_D\cap \Gamma_N = \emptyset$. For simplicity, we assume both $\left|  \Gamma_D \right| > 0$ and $\left|   \Gamma_N \right| > 0$, with $|\cdot|$ denoting the Hausdorff measure.

Setting $\bm H^1_{0,\Gamma_D}(\Omega)= \{\boldsymbol{v} \in \bm H^1(\Omega)^d \;|\; \boldsymbol{v}=\boldsymbol{0} \; \text{on}\; \Gamma_D\}$ and assuming for simplicity that $\boldsymbol{u}_D=\boldsymbol{0}$ and the forcing term $\boldsymbol{f}$ and traction $\boldsymbol{g}_N$ are regular, i.e. $\boldsymbol{f}\in \boldsymbol{L}^2(\Omega)$ and $\boldsymbol{g}_N\in \boldsymbol{L}^2(\Gamma_N)$, the weak formulation of \eqref{eq:stokes_primal} reads as: 
for any time $t \in (0, T]$ find $(\bm u, p)(t) \in \bm H^1_{0,\Gamma_D}(\Omega) \times  L^2(\Omega)$ such that 
\begin{align*}
\left(\frac{\partial\boldsymbol{u}}{\partial t}, \boldsymbol{v}\right)_\Omega +
( \mu \boldsymbol{\nabla} \boldsymbol{u}, \boldsymbol{\nabla} \boldsymbol{v})_\Omega
- (\boldsymbol{\nabla} \cdot \boldsymbol{v},  p)_\Omega
&= ( \boldsymbol{f}, \boldsymbol{v})_\Omega +  (\boldsymbol{g}_N , \boldsymbol{v})_{\Gamma_N}, \\
(\boldsymbol{\nabla} \cdot \boldsymbol{u}, \ q )_\Omega  &= 0,
\end{align*}
 for any $(\bm v, q) \in \bm H^1_{0,\Gamma_D}(\Omega) \times  L^2(\Omega)$.



\subsection{Pseudo-stress weak formulation}

We rewrite the Stokes problem \eqref{eq:stokes_primal} in a different form by introducing the pseudo-stress $\boldsymbol{\sigma}(\boldsymbol{u},p) = \mu\boldsymbol{\nabla}\boldsymbol{u} - p\mathbb{I}_d$  variable, where $\mathbb{I}_d$ is the identity matrix in $\mathbb{R}^{d\times d}$. Then, \eqref{eq:stokes_primal} can be rewritten as

\begin{equation}\label{eq:stokes_dual}
\begin{cases}
  \dfrac{\partial \boldsymbol{u}}{\partial t} 
  -\boldsymbol{\nabla}\cdot\boldsymbol{\sigma} = \boldsymbol{f}, 
  \qquad&\text{in }\Omega\times(0,T], \\
  \mu^{-1}{\rm \textbf{dev}}(\boldsymbol{\sigma}) - \boldsymbol{\nabla}\boldsymbol{u} = \bm 0, 
  \qquad &\text{in }\Omega\times(0,T], \\
  \boldsymbol{u} = \boldsymbol{u}_D, \;\; &{\rm on} \ \Gamma_D\times(0,T], \\
  \boldsymbol{\sigma}\ \boldsymbol{n} = \boldsymbol{g}_N,\;\; 
  &{\rm on}\  \Gamma_N\times(0,T], \\
  \boldsymbol{u}(\cdot,t=0) = \boldsymbol{u}_0, &\text{in }\Omega, 
\end{cases}
\end{equation}
where the deviatoric operator is defined such that 
${\rm \textbf{dev}}(\boldsymbol{\tau}) = \boldsymbol{\tau} - \frac1d {\rm tr}(\boldsymbol{\tau})\mathbb{I}_d$, and ${\rm tr}(\cdot)$ is the trace operator.
Notice that the incompressibility constraint $\boldsymbol{\nabla}\cdot\boldsymbol{u} = 0$ is enforced through the second equation of the previous system that yields ${\rm tr}(\boldsymbol{\nabla}\boldsymbol{u}) = \boldsymbol{\nabla}\cdot\boldsymbol{u} = 0$. 

Assuming enough regularity of the problem's data and solution, we can derive in time the second and third equations in \eqref{eq:stokes_dual} and plug the expression of $\frac{\partial \boldsymbol{u}}{\partial t}$ that we get from the first equation into the second one. Thus, we obtain:
\begin{equation}
\begin{cases}
  \frac1\mu \frac{\partial {\rm \textbf{dev}}(\boldsymbol{\sigma})}{\partial t}
  - \boldsymbol{\nabla}\left(\boldsymbol{\nabla}\cdot\boldsymbol{\sigma}\right)=\boldsymbol{F}, 
  \qquad &\text{in }\Omega\times(0,T], \\
  \boldsymbol{\nabla}\cdot\boldsymbol{\sigma}=\boldsymbol{g}_D, \;\; &{\rm on}\ \Gamma_D\times(0,T], \\
  \boldsymbol{\sigma}\ \boldsymbol{n} = \boldsymbol{g}_N,\;\; 
  &{\rm on}\ \Gamma_N\times(0,T], \\
  {\rm \textbf{dev}}(\boldsymbol{\sigma})(\cdot,t=0) = \boldsymbol{\sigma}_0, &\text{in }\Omega, 
\end{cases}
\label{eq:eqnn}
\end{equation}
with $\boldsymbol{F}= \boldsymbol{\nabla}\boldsymbol{f}$ and $\boldsymbol{g}_D = \frac{\partial\boldsymbol{u}_D}{\partial t}-\boldsymbol{f}_{|\Gamma_D}$. In this way, we reformulate problem \eqref{eq:stokes_primal} only in the pseudo-stress variable $\boldsymbol{\sigma}$. We remark that we have also replaced the initial condition on $\boldsymbol{u}(\cdot,t=0)$ with a condition on ${\rm \textbf{dev}}(\boldsymbol{\sigma})(\cdot,t=0)$. This can be done under the assumption that the velocity solution is sufficiently regular.

In order to strongly enforce the essential traction condition on $\Gamma_N$, we define the subspace 
$$
\bm H_{0,\Gamma_N}({\rm div}, \Omega) = \{\boldsymbol{\eta}\in\bm H({\rm div}, \Omega)\ | \ \langle\bm\eta\ \bm n,\bm v\rangle_{\partial\Omega}=0 \ \ \forall \bm v\in \bm H^1_{0,\Gamma_D}(\Omega)\}.
$$
For simplicity, in this case, we assume that $\boldsymbol{g}_N = \bm 0$. The general case of a non-homogeneous condition can be obtained by minor modifications.
Then, testing the first equation with $\boldsymbol{\tau}\in \bm H_{0,\Gamma_N}({\rm div}, \Omega)$ and integrating by parts, we obtain the following weak formulation: for any $t\in (0, T]$, find $\bm \sigma(t) \in \bm H_{0,\Gamma_N}({\rm div}, \Omega)$ such that  
\begin{equation}\label{eq:weak_stress}
     (\mu^{-1} \partial_t{\rm \textbf{dev}}(\boldsymbol{\sigma}), {\rm \textbf{dev}}(\boldsymbol{\tau}))_{\Omega} + (\boldsymbol{\nabla}\cdot\boldsymbol{\sigma}, \boldsymbol{\nabla}\cdot\boldsymbol{\tau})_{\Omega} =  (\boldsymbol{F}, \boldsymbol{\tau})_{\Omega}  +  \langle\bm g_D, \boldsymbol{\tau}\,\boldsymbol{n}\rangle_{\Gamma_D}
\end{equation}
for any $\boldsymbol{\tau}\in \bm H_{0,\Gamma_N}({\rm div}, \Omega)$.
\begin{remark}[Data regularity]
For the terms appearing in the right-hand side of \eqref{eq:weak_stress} to be well-defined, for all $t\in (0, T]$ we would require $\boldsymbol{F}(t)\in \boldsymbol{L}^2(\Omega)$ and $\boldsymbol{g}_D (t)\in H^{\frac12}(\Gamma_D)$, that imply $\bm f(t)\in \bm H^1(\Omega)$. This assumption can be weakened by using an integration by parts to rewrite to right-hand side of \eqref{eq:weak_stress} as 
$$
(\boldsymbol{F}, \boldsymbol{\tau})_{\Omega}  
+  \langle\bm g_D, \boldsymbol{\tau}\,\boldsymbol{n} \rangle_{\Gamma_D}=
-(\boldsymbol{f}, \boldsymbol{\nabla}\cdot\boldsymbol{\tau})_{\Omega} + \langle \partial_t \bm u_D, \boldsymbol{\tau}\,\boldsymbol{n} \rangle_{\Gamma_D}.
$$
\end{remark}

\begin{remark}[Pressure and velocity recovery]\label{rem-sub:p_and_u_rec}
It is possible to compute the pressure and velocity fields from the solution of \eqref{eq:weak_stress}.
The pressure can be easily obtained from the relation $p = -\frac1d {\rm tr}(\boldsymbol{\sigma})$, while to recover the velocity $\bm u$, we use the fundamental theorem of calculus
and the first equation in \eqref{eq:stokes_dual} to  obtain
\begin{equation*}\label{eq:recovery_vel}
 \boldsymbol{u}(t) = \boldsymbol{u}_0 +  \int_{0}^{t} \boldsymbol{\nabla}\cdot\boldsymbol{\sigma}(s) + \boldsymbol{f}(s) \, \text{d}s.
\end{equation*}
\end{remark}
%

\subsection{Well-posedness of the continuous problem}

The existence and uniqueness of the solution to problem \eqref{eq:weak_stress} can be inferred in the framework of degenerate implicit evolution equations. In particular, we hinge on the following result; its proof along with examples and further details, is provided in \cite[Chapter V, Theorem 4.1]{Showalter1994}. 
\begin{proposition}
\label{th:abstract_well_posedness}
Let $V_m$ be a seminorm space obtained from a symmetric and non-negative bilinear form $m(\cdot,\cdot)$ and let $\mathcal{M}:V_m\to V_m'$ be the linear operator defined by $\mathcal{M}x(y) = m(x,y)$ for any $x,y \in V_m$. Let $D$ be a dense subspace of $\ V_m$ and $\mathcal{L}: D \rightarrow V_m'$ be linear and monotone. Let $K(\mathcal{M})$ and $K(\mathcal{L})$ denote the kernels of the operators $\mathcal{M}$ and $\mathcal{L}$, respectively. It holds:
\begin{enumerate}
    \item[(a)] if $\ K(\mathcal{M}) \cap D \subseteq K(\mathcal{L})$ and if $\mathcal{M} + \mathcal{L}: D \rightarrow V_m'$ is onto, then for every $f \ \in \ C^1([0,T],V_m')$ and $u_0 \in D$ there exists a solution of $\ 
    \partial_t(\mathcal{M}u)(t)  + \mathcal{L}u(t)  =  f(t), \ t>0,$ with $
    (\mathcal{M}u)(0) = \mathcal{M}u_0$;
    \item[(b)] if $\ K(\mathcal{M}) \cap K(\mathcal{L}) = \emptyset$, then there is at most one solution.
\end{enumerate} 
\end{proposition}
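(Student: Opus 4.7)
My plan is to reduce the degenerate implicit evolution equation $\partial_t(\mathcal{M}u) + \mathcal{L}u = f$ to a standard first-order evolution problem on a Hilbert space, to which Hille-Yosida / Lumer-Phillips semigroup theory applies, and then to treat parts (a) and (b) by a range-condition-plus-monotonicity argument and by an energy identity, respectively. The starting move is to resolve the degeneracy arising from $m(\cdot,\cdot)$ being only a seminorm: I would quotient $V_m$ by $K(\mathcal{M}) = \{x \in V_m : m(x,x) = 0\}$ and complete with respect to the inner product induced by $m$ to obtain a Hilbert space $H$. The hypothesis $K(\mathcal{M}) \cap D \subseteq K(\mathcal{L})$ is precisely what is needed to descend $\mathcal{L}$ consistently to an operator defined on the image of $D$ in the quotient, since two elements of $D$ differing by an element of $K(\mathcal{M})$ are then mapped to the same element of $V_m'$ by $\mathcal{L}$. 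This reformulates the problem as $\dot v(t) + A v(t) = g(t)$ on $H$, for a suitably defined (possibly unbounded) operator $A$.

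For part (a), I would verify the hypotheses of Lumer-Phillips for $-A$. Monotonicity of $\mathcal{L}$, namely $\mathcal{L}x(x) \geq 0$ for $x \in D$, translates directly into dissipativity, while surjectivity of $\mathcal{M}+\mathcal{L}: D \to V_m'$ plays the role of the range condition $\mathrm{Ran}(I+A) = H$. Together these imply that $-A$ generates a $C_0$-semigroup of contractions. The regularity assumptions $f \in C^1([0,T]; V_m')$ and $u_0 \in D$ then upgrade the mild solution produced by the semigroup to a classical solution of the original degenerate equation, with the initial condition interpreted as $(\mathcal{M}u)(0) = \mathcal{M}u_0$ in $V_m'$.

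For part (b), I would use a direct energy identity. If $u_1$ and $u_2$ are two solutions with the same initial value of $\mathcal{M}u$, their difference $w = u_1 - u_2$ satisfies $\partial_t(\mathcal{M}w) + \mathcal{L}w = 0$ with $\mathcal{M}w(0) = 0$. Pairing this equation with $w$ in the $V_m$-$V_m'$ duality yields
\begin{equation*}
\tfrac{1}{2}\tfrac{d}{dt}\, m(w,w) + \mathcal{L}w(w) = 0,
\end{equation*}
and by monotonicity $\mathcal{L}w(w) \geq 0$, so $m(w(t),w(t))$ is nonincreasing and vanishes at $t=0$; hence $w(t) \in K(\mathcal{M})$ for all $t$. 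Substituting back into the evolution equation gives $\mathcal{L}w(t) = 0$, so $w(t) \in K(\mathcal{M}) \cap K(\mathcal{L})$, which is trivial by hypothesis, forcing $w \equiv 0$ and hence uniqueness.

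The main obstacle I anticipate is the careful bookkeeping around the seminorm: one must track rigorously which objects live in $V_m$, in the quotient $V_m/K(\mathcal{M})$, in the completion $H$, and in their respective duals, and verify that the descent of $\mathcal{L}$, the translation of the surjectivity of $\mathcal{M}+\mathcal{L}$ into the range condition on $I+A$, and the interpretation of the initial condition $(\mathcal{M}u)(0) = \mathcal{M}u_0$ in the degenerate setting are all mutually consistent. The semigroup-generation and energy portions are otherwise routine; the entire delicacy lies in making the reduction to a non-degenerate Hilbert-space evolution rigorous.
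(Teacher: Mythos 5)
The paper does not actually prove this proposition: it is quoted verbatim from \cite[Chapter V, Theorem 4.1]{Showalter1994}, so there is no internal proof to compare against. Your sketch --- quotienting $V_m$ by $K(\mathcal{M})$ and completing to a Hilbert space, using $K(\mathcal{M})\cap D\subseteq K(\mathcal{L})$ to descend $\mathcal{L}$ (and noting that every element of $V_m'$ automatically annihilates $K(\mathcal{M})$), converting the surjectivity of $\mathcal{M}+\mathcal{L}$ into the Lumer--Phillips range condition, and closing uniqueness with the energy identity that forces $w(t)\in K(\mathcal{M})\cap K(\mathcal{L})$ --- is precisely the argument of that reference, and it is correct, with the only genuine work being the bookkeeping you already flag.
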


We rewrite the initial boundary value problem \eqref{eq:weak_stress} in the notation of the previous proposition assuming for simplicity $\Gamma_N=\partial\Omega$. Therefore, we define the operators $
    \mathcal{M} = \mu^{-1} {\rm \textbf{dev}}(\cdot): V_m\to V_m' \ \text{and} \ \mathcal{L} = - \boldsymbol{\nabla}\left(\boldsymbol{\nabla}\cdot(\cdot)\right): D\to V_m',
$
with
$$
V_m = \bm{L}^2(\Omega), \qquad V_m' = \{\boldsymbol{\tau}\in \bm{L}^2(\Omega)\ | \ \textrm{tr}(\boldsymbol{\tau})=0\},$$ 
$$D = \{\boldsymbol{\tau}\in \bm H_{0,\Gamma_N}({\rm div}, \Omega)\ | \ \boldsymbol{\nabla}\cdot\boldsymbol{\tau}\in \boldsymbol{H}^1(\Omega)\ \text{ and }\ 
\boldsymbol{\nabla}\cdot\left(\boldsymbol{\nabla}\cdot\boldsymbol{\tau}\right)=0\}.
$$
Then, $D$ is dense in $V_m$ and $\mathcal{L}$ is continuous and strongly monotone.
We remark that $K(\mathcal{M}) \cap D = \{\psi\mathbb{I}_d \in V_m \ | \ \boldsymbol{\Delta} \psi=0\ \text{and} \ \psi = 0  \ \text{on} \ \Gamma_N\} = \emptyset$. As a result, it holds $K(\mathcal{M}) \cap D \subseteq K(L)$. It also holds that $K(\mathcal{M}) \cap K(\mathcal{L}) = \emptyset$. Therefore the hypotheses of Proposition \ref{th:abstract_well_posedness} are satisfied and, as a result, the existence and uniqueness of the solution are guaranteed.

\subsection{Stability analysis of the continuous problem}

In this section, before proving the stability estimate for the solution of the weak problem \eqref{eq:weak_stress}, we introduce the following preliminary results. For the sake of presentation, we postpone the proof in Appendix~\ref{sec:appendix}. 

\begin{lemma}\label{lemma:dev_div} (${\rm dev-div}$ and trace inequalities). 
Let $\boldsymbol{\sigma} \in \bm H_{0,\Gamma_N}({\rm div}, \Omega)$, with $|\Gamma_N| > 0$. 
Then, there exists two positive constants $C_{\rm dd}$ and $C_{\rm tr}$ such that
\begin{align}
\label{eq:devdiv}
\left\|\boldsymbol{\sigma}\right\|_{\bm L^{2}(\Omega)}\ &\le C_{\rm dd} \left( \left\|
 \boldsymbol{{\rm dev}}(\boldsymbol{\sigma})\right\|_{\bm L^{2}(\Omega)}+\left\|\boldsymbol{\nabla}\cdot\boldsymbol{\sigma}\right\|_{\bm L^{2}(\Omega)}\right), \\
 \label{eq:traceHdiv}
\left\|\boldsymbol{\sigma}\boldsymbol{n}\right\|_{\bm H^{-1/2}(\Gamma_D)} &\le C_{\rm tr} \left( \left\|
 \boldsymbol{{\rm dev}}(\boldsymbol{\sigma})\right\|_{\bm L^{2}(\Omega)}+\left\|\boldsymbol{\nabla}\cdot\boldsymbol{\sigma}\right\|_{\bm L^{2}(\Omega)}\right).
\end{align}
\end{lemma}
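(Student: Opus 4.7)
The plan is to establish \eqref{eq:devdiv} first via a right inverse of the divergence operator, and then to deduce \eqref{eq:traceHdiv} as a corollary by combining the standard $\bm H({\rm div},\Omega)$ normal-trace identity with \eqref{eq:devdiv}.

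For \eqref{eq:devdiv}, I start from the pointwise orthogonal decomposition
\[
\boldsymbol{\sigma} = {\rm \textbf{dev}}(\boldsymbol{\sigma}) + \tfrac{1}{d}\,q\,\mathbb{I}_d, \qquad q := {\rm tr}(\boldsymbol{\sigma}),
\]
which, using ${\rm \textbf{dev}}(\boldsymbol{\sigma}):\mathbb{I}_d = 0$ and $\mathbb{I}_d:\mathbb{I}_d = d$, yields the identity $\|\boldsymbol{\sigma}\|^2_{\bm L^2(\Omega)} = \|{\rm \textbf{dev}}(\boldsymbol{\sigma})\|^2_{\bm L^2(\Omega)} + \tfrac{1}{d}\|q\|^2_{L^2(\Omega)}$, so the task reduces to controlling $\|q\|_{L^2(\Omega)}$. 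Since $|\Gamma_N|>0$, a classical Bogovskii-type lifting on Lipschitz domains with mixed boundary provides $\boldsymbol{v}\in\bm H^1_{0,\Gamma_D}(\Omega)$ satisfying $\boldsymbol{\nabla}\cdot\boldsymbol{v}=q$ with $\|\boldsymbol{v}\|_{1,\Omega}\lesssim\|q\|_{L^2(\Omega)}$; this is the step I would invoke rather than re-derive. Because $\boldsymbol{\sigma}\in\bm H_{0,\Gamma_N}({\rm div},\Omega)$ and $\boldsymbol{v}\in\bm H^1_{0,\Gamma_D}(\Omega)$, the defining duality forces $\langle\boldsymbol{\sigma}\boldsymbol{n},\boldsymbol{v}\rangle_{\partial\Omega}=0$, so integration by parts produces
\[
\int_\Omega {\rm \textbf{dev}}(\boldsymbol{\sigma}):\boldsymbol{\nabla}\boldsymbol{v}\,dx + \tfrac{1}{d}\int_\Omega q\,\boldsymbol{\nabla}\cdot\boldsymbol{v}\,dx = -\int_\Omega(\boldsymbol{\nabla}\cdot\boldsymbol{\sigma})\cdot\boldsymbol{v}\,dx.
\]
Substituting $\boldsymbol{\nabla}\cdot\boldsymbol{v}=q$, applying Cauchy--Schwarz, and using $\|\boldsymbol{v}\|_{1,\Omega}\lesssim\|q\|_{L^2(\Omega)}$ gives $\|q\|_{L^2(\Omega)}\lesssim\|{\rm \textbf{dev}}(\boldsymbol{\sigma})\|_{\bm L^2(\Omega)}+\|\boldsymbol{\nabla}\cdot\boldsymbol{\sigma}\|_{\bm L^2(\Omega)}$, which together with the orthogonal identity establishes \eqref{eq:devdiv}.

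For \eqref{eq:traceHdiv}, I pick an arbitrary $\varphi\in\bm H^{1/2}(\Gamma_D)$ and lift it to $\widetilde\varphi\in\bm H^1(\Omega)$ with $\widetilde\varphi|_{\Gamma_D}=\varphi$ and $\|\widetilde\varphi\|_{1,\Omega}\lesssim\|\varphi\|_{1/2,\Gamma_D}$, using the standard trace/extension theorem. The crucial observation is that, thanks to the definition of $\bm H_{0,\Gamma_N}({\rm div},\Omega)$, the duality $\langle\boldsymbol{\sigma}\boldsymbol{n},\widetilde\varphi\rangle_{\partial\Omega}$ is insensitive to the values of the lift on $\Gamma_N$ (any two such extensions differ by an element of $\bm H^1_{0,\Gamma_D}(\Omega)$, against which $\boldsymbol{\sigma}\boldsymbol{n}$ pairs trivially), so one may \emph{define} $\langle\boldsymbol{\sigma}\boldsymbol{n},\varphi\rangle_{\Gamma_D}:=\langle\boldsymbol{\sigma}\boldsymbol{n},\widetilde\varphi\rangle_{\partial\Omega}$. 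Integration by parts and Cauchy--Schwarz then give
\[
\left|\langle\boldsymbol{\sigma}\boldsymbol{n},\varphi\rangle_{\Gamma_D}\right| \le \|\boldsymbol{\sigma}\|_{\bm L^2(\Omega)}\,\|\boldsymbol{\nabla}\widetilde\varphi\|_{\bm L^2(\Omega)} + \|\boldsymbol{\nabla}\cdot\boldsymbol{\sigma}\|_{\bm L^2(\Omega)}\,\|\widetilde\varphi\|_{\bm L^2(\Omega)} \lesssim \|\boldsymbol{\sigma}\|_{{\rm div},\Omega}\,\|\varphi\|_{1/2,\Gamma_D},
\]
and applying \eqref{eq:devdiv} to replace $\|\boldsymbol{\sigma}\|_{\bm L^2(\Omega)}$ by $\|{\rm \textbf{dev}}(\boldsymbol{\sigma})\|_{\bm L^2(\Omega)}+\|\boldsymbol{\nabla}\cdot\boldsymbol{\sigma}\|_{\bm L^2(\Omega)}$, followed by supremum over $\varphi$, yields \eqref{eq:traceHdiv}.

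The main technical hurdle is the Bogovskii-type surjectivity of $\boldsymbol{\nabla}\cdot:\bm H^1_{0,\Gamma_D}(\Omega)\to L^2(\Omega)$ with a bounded right inverse: on a Lipschitz domain with $|\Gamma_N|>0$ the presence of the Neumann portion removes the zero-mean compatibility condition that is required for fully Dirichlet boundaries, but the proof is still non-trivial, and I would cite a standard reference (e.g.\ Galdi or Girault--Raviart). Everything else reduces to an orthogonal decomposition, one integration by parts, and Cauchy--Schwarz.
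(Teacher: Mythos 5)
Your proof is correct and follows essentially the same route as the paper's: both hinge on the bounded right inverse of the divergence, $\boldsymbol{v}\in\bm H^1_{0,\Gamma_D}(\Omega)$ with $\boldsymbol{\nabla}\cdot\boldsymbol{v}={\rm tr}(\boldsymbol{\sigma})$, followed by integration by parts (the boundary term vanishing by the definition of $\bm H_{0,\Gamma_N}({\rm div},\Omega)$) and Cauchy--Schwarz, and both deduce the trace bound from the ${\rm dev}$--${\rm div}$ inequality via an $\bm H^1$ lifting and Green's formula. The only differences are cosmetic: you use the orthogonal $\bm L^2$ decomposition $\|\boldsymbol{\sigma}\|^2=\|{\rm \textbf{dev}}(\boldsymbol{\sigma})\|^2+\tfrac1d\|{\rm tr}(\boldsymbol{\sigma})\|^2$ where the paper uses the triangle inequality, and you are slightly more explicit about why the $\bm H^{-1/2}(\Gamma_D)$ pairing is well defined via extensions to $\partial\Omega$.
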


As a result of the previous lemma, we get that $\|\mu^{-\frac12}{\rm \textbf{dev}}(\boldsymbol{\sigma})\|_{\bm L^{2}(\Omega)} +\left\|\boldsymbol{\nabla}\cdot\boldsymbol{\sigma}\right\|_{\bm L^{2}(\Omega)} $ defines a (weighted) norm on the space $\bm H_{0,\Gamma_N}({\rm div}, \Omega)$. 
Hinging on the previous results, we establish a stability estimate for the pseudo-stress solution. 

\begin{theorem}[Stability estimate]\label{thm:stability_continuous}
Let $\bm \sigma(t) \in \bm H_{0,\Gamma_N}({\rm div}, \Omega)$  be the solution of \eqref{eq:weak_stress} for any time $t \in (0,T]$. Then, it holds
\begin{align*}
& \left\|\mu^{-\frac12}\boldsymbol{{\rm dev}}(\boldsymbol{\sigma})\right\|_{L^{\infty}(0,T;\bm L^{2}{(\Omega)})}^{2} + 
\int_{0}^{T} \left\|\boldsymbol{\nabla}\cdot\boldsymbol{\sigma}(t)\right\|_{\bm L^{2}(\Omega)}^{2} \, {\rm d} t \\
&\quad \lesssim (1+2\mu T) \int_{0}^{T} \left\|\boldsymbol{F}(t)\right\|_{\bm L^{2}(\Omega)}^{2} + \left\|\boldsymbol{g_D}(t)\right\|_{\bm H^{\frac12}(\Gamma_D)}^{2} \, {\rm d} t + 
 \left\|\mu^{-\frac12}\boldsymbol{{\rm dev}}(\boldsymbol{\sigma}_0)\right\|_{\bm L^{2}(\Omega)}^{2},
\end{align*}
with hidden constant independent of the viscosity $\mu$ and the final time $T$.
\end{theorem}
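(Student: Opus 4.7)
The plan is to derive an energy identity by testing the weak problem \eqref{eq:weak_stress} with $\boldsymbol{\tau} = \boldsymbol{\sigma}(t)$. Since ${\rm \textbf{dev}}$ is a symmetric linear projection commuting with $\partial_t$, the first bilinear form is the time derivative of the dev-weighted norm, so one obtains
\[
\frac{1}{2}\frac{d}{dt}\left\|\mu^{-\frac12}{\rm \textbf{dev}}(\boldsymbol{\sigma})\right\|^2_{\boldsymbol L^{2}(\Omega)} + \left\|\boldsymbol{\nabla}\cdot\boldsymbol{\sigma}\right\|^2_{\boldsymbol L^{2}(\Omega)} = (\boldsymbol{F},\boldsymbol{\sigma})_{\Omega} + \langle \boldsymbol{g}_D,\boldsymbol{\sigma}\,\boldsymbol{n}\rangle_{\Gamma_D}.
\]
Applying Cauchy--Schwarz in space and in the duality pairing, and then the dev-div and trace inequalities of Lemma~\ref{lemma:dev_div}, the right-hand side is majorized by a quantity of the form $R(s)\bigl(\|{\rm \textbf{dev}}(\boldsymbol{\sigma})(s)\|_{\boldsymbol L^{2}(\Omega)} + \|\boldsymbol{\nabla}\cdot\boldsymbol{\sigma}(s)\|_{\boldsymbol L^{2}(\Omega)}\bigr)$, where $R(s)^{2} \lesssim \|\boldsymbol F(s)\|^{2}_{\boldsymbol L^{2}(\Omega)} + \|\boldsymbol g_D(s)\|^{2}_{\boldsymbol H^{1/2}(\Gamma_D)}$.

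After integrating in time from $0$ to an arbitrary $t\in(0,T]$, I would treat the two summands of the integrated right-hand side separately. For the product involving $\|\boldsymbol{\nabla}\cdot\boldsymbol{\sigma}\|_{\boldsymbol L^{2}(\Omega)}$, a standard Young inequality produces a term of order $R(s)^{2}$ plus a fraction of $\|\boldsymbol{\nabla}\cdot\boldsymbol{\sigma}(s)\|^{2}_{\boldsymbol L^{2}(\Omega)}$, the latter being absorbed into the dissipative term on the left. The delicate step, needed to avoid the exponential factor that a direct Gr\"onwall argument would produce and to recover the linear factor $(1+2\mu T)$, concerns the summand $\int_{0}^{t} R(s)\|{\rm \textbf{dev}}(\boldsymbol{\sigma})(s)\|_{\boldsymbol L^{2}(\Omega)}\,ds$. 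I would factor out $\mu^{1/2}$, pull the pointwise norm $\|\mu^{-\frac12}{\rm \textbf{dev}}(\boldsymbol{\sigma})(s)\|_{\boldsymbol L^{2}(\Omega)}$ out of the integral by its $L^{\infty}(0,t;\boldsymbol L^{2}(\Omega))$ norm, then apply Cauchy--Schwarz in time to get a factor $\sqrt{t}\,\bigl(\int_{0}^{t} R(s)^{2}\,ds\bigr)^{1/2}$, and finally invoke Young's inequality to split off a small multiple of $\|\mu^{-\frac12}{\rm \textbf{dev}}(\boldsymbol{\sigma})\|^{2}_{L^{\infty}(0,t;\boldsymbol L^{2}(\Omega))}$ (absorbable on the left once the sup is taken) together with a contribution of order $\mu\,t\int_{0}^{t} R(s)^{2}\,ds$.

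To conclude, I would take the supremum over $t\in[0,T]$ on the left (the right-hand side is monotone in $t$), absorb the $L^{\infty}$-in-time term, and evaluate at $t=T$: the factor $(1+2\mu T)$ then arises as the sum of the $O(1)$ constant coming from the standard Young applied in the $\|\boldsymbol{\nabla}\cdot\boldsymbol{\sigma}\|$ step and the $O(\mu T)$ contribution coming from the delicate step above. The main obstacle is precisely this replacement of Gr\"onwall's lemma by the Cauchy--Schwarz-in-time plus $L^{\infty}$-bound plus Young argument: this is what both keeps the hidden constant independent of the viscosity $\mu$ and the final time $T$ and produces linear, rather than exponential, growth in $T$.
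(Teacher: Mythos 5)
Your proof is correct and follows essentially the same strategy as the paper: test with $\boldsymbol{\tau}=\boldsymbol{\sigma}(t)$, control the right-hand side via the dev--div and trace inequalities of Lemma~\ref{lemma:dev_div}, and replace Gr\"onwall by an absorption argument against the $L^{\infty}$-in-time norm, which is exactly how the factor $(1+2\mu T)$ and the $\mu$- and $T$-independent hidden constant arise. The only (immaterial) difference is the order of operations in the delicate step: the paper applies Young pointwise in time with the $T$-weighted parameter (splitting off $\frac{1}{4T}\|\mu^{-1/2}{\rm \textbf{dev}}(\boldsymbol{\sigma})\|^2$ before integrating), whereas you integrate first and then use the $L^{\infty}$ extraction plus Cauchy--Schwarz in time plus Young; both yield the same bound.
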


\begin{proof}  
Taking $\boldsymbol{\tau} = \boldsymbol{\sigma}(t)$ in \eqref{eq:weak_stress}, yields
\begin{align*}
 (\mu^{-1} \partial_t {\rm \textbf{dev}}(\boldsymbol{\sigma}), {\rm \textbf{dev}}(\boldsymbol{\sigma}))_{\Omega}
& + (\boldsymbol{\nabla}\cdot\boldsymbol{\sigma}, \boldsymbol{\nabla}\cdot\boldsymbol{\sigma})_{\Omega}
  = (\boldsymbol{F}, \boldsymbol{\sigma})_{\Omega} +  \langle\bm g_D, \boldsymbol{\sigma}\boldsymbol{n} \rangle_{\Gamma_D}.
\end{align*}
The previous identity can be rewritten as
\begin{align*}
\frac{\partial}{\partial t} \left\|\mu^{-\frac12} {\rm \textbf{dev}}(\boldsymbol{\sigma})\right\|_{\bm L^{2}(\Omega)}^{2} + 2 \left\|\boldsymbol{\nabla}\cdot\boldsymbol{\sigma}\right\|_{\bm L^{2}(\Omega)}^{2} = 2(\boldsymbol{F}, \boldsymbol{\sigma})_{\Omega} + 2 \langle\bm g_D , \boldsymbol{\sigma}\boldsymbol{n}\rangle_{\Gamma_D}.
\end{align*}
Now, on the right-hand side, we use the Cauchy–Schwarz inequality, Lemma~\ref{lemma:dev_div} and Young's inequality, to get
\begin{align*}
(\boldsymbol{F}, &\boldsymbol{\sigma})_{\Omega} +  \langle\bm g_D , \boldsymbol{\sigma}\boldsymbol{n}\rangle_{\Gamma_D}  
\\ &\le \big(C_{\rm dd}\left\|\boldsymbol{F}\right\|_{\bm L^{2}(\Omega)} +  C_{\rm tr}\left\|\boldsymbol{g_D}\right\|_{\bm H^{\frac12}(\Gamma_D)}\big) \left( \left\|
 \boldsymbol{{\rm dev}}(\boldsymbol{\sigma})\right\|_{\bm L^{2}(\Omega)}+\left\|\boldsymbol{\nabla}\cdot\boldsymbol{\sigma}\right\|_{\bm L^{2}(\Omega)}\right)
\\ &\le 
\frac{\mathcal{R}^2}{2} + 
\frac{\left\|\mu^{-\frac12}\boldsymbol{{\rm dev}}(\boldsymbol{\sigma})\right\|_{\bm L^{2}(\Omega)}^{2}}{4T} + \frac{\left\|\boldsymbol{\nabla}\cdot\boldsymbol{\sigma}\right\|_{\bm L^{2}(\Omega)}^{2}}{2},
\end{align*}
where we have introduced $\mathcal{R} = \sqrt{1+2\mu T} \big(C_{\rm dd}\left\|\boldsymbol{F}\right\|_{\bm L^{2}(\Omega)} +  C_{\rm tr}\left\|\boldsymbol{g_D}\right\|_{\bm H^{\frac12}(\Gamma_D)}\big)$.
Therefore, it is inferred that
$$
\frac{\partial}{\partial t} \left\|\mu^{-\frac12} {\rm \textbf{dev}}(\boldsymbol{\sigma})\right\|_{\bm L^{2}(\Omega)}^{2} +  \left\|\boldsymbol{\nabla}\cdot\boldsymbol{\sigma}\right\|_{\bm L^{2}(\Omega)}^{2}\le
\mathcal{R}^2+\frac1{2T}\left\|\mu^{-\frac12}\boldsymbol{{\rm dev}}(\boldsymbol{\sigma})\right\|_{\bm L^{2}(\Omega)}^{2}.
$$
We now integrate the previous bound over the time interval $(0,t)$, obtaining
\begin{align}\label{inequal}
& \left\|\mu^{-\frac12} \boldsymbol{{\rm dev}}(\boldsymbol{\sigma})(t)\right\|_{\bm L^{2}(\Omega)}^{2} + \int_{0}^{t} \left\|\boldsymbol{\nabla}\cdot\boldsymbol{\sigma}(s)\right\|_{\bm L^{2}(\Omega)}^{2}\, {\rm d} s  \nonumber \\ &\;  \le
 \left\|\mu^{-\frac12}\boldsymbol{{\rm dev}}(\boldsymbol{\sigma}_0)\right\|_{\bm L^{2}(\Omega)}^{2} +  
 \int_{0}^{t} \mathcal{R}^2(s) + \frac1{2T}\left\|\mu^{-\frac12}\boldsymbol{{\rm dev}}(\boldsymbol{\sigma})(s)\right\|_{\bm L^{2}(\Omega)}^{2} \, {\rm d} s.
\end{align}
Then, taking the supremum in $(0, T]$ on both sides of \eqref{inequal}, we get 
\begin{align*}
& \sup_{t\in(0,T]}\left\|\mu^{-\frac12}\boldsymbol{{\rm dev}}(\boldsymbol{\sigma})(t)\right\|_{\bm L^{2}{(\Omega)}}^{2}
+ \int_{0}^{T} \left\|\boldsymbol{\nabla}\cdot\boldsymbol{\sigma}(t)\right\|_{\bm L^{2}(\Omega)}^{2}\, {\rm d} t 
\\ &\quad \le
\left\|\mu^{-\frac12}\boldsymbol{{\rm dev}}(\boldsymbol{\sigma}_0)\right\|_{\bm L^{2}(\Omega)}^{2} +  
 \int_{0}^T \mathcal{R}^2(t) \, {\rm d} t 
 + \frac12 \sup_{t\in(0,T]}\left\|\mu^{-\frac12}\boldsymbol{{\rm dev}}(\boldsymbol{\sigma})(t)\right\|_{\bm L^{2}(\Omega)}^{2} .
\end{align*}
Rearranging the previous inequality we get the conclusion.
\end{proof}

\section{Numerical discretization}\label{sec:dG_discretization}

\subsection{PolydG semi-discrete formulation} \label{polydg_semi}
In this section, we introduce the PolydG semi-discrete formulation of \eqref{eq:weak_stress}. Let $\mathcal{T}_h$ be a polytopal mesh of the domain $\Omega$, i.e.,   $\mathcal{T}_h = \bigcup_{k} \kappa$, being  $\kappa$ a general polygon ($d = 2$) or polyhedron ($d = 3$). 
Given a polytopal element $\kappa$, we define by $\left|  \kappa \right|$ its measure and by $h_k$ its diameter, and set $h = \max_{\kappa \in \mathcal{T}_h} h_k$. We let a polynomial degree $p_k \ge 1$ be associated with each element $\kappa \in \mathcal{T}_h$ and we denote by $p_h : \mathcal{T}_h \rightarrow \mathbb{N^*} = \{n \in \mathbb{N} : n \ge 1\}$ the piecewise constant function such that $(p_h)|_{\kappa} = p_k$. Then, we define the discrete space $\boldsymbol{V}_h = [\textit{P}_{p_h} (\mathcal{T}_h)]^{d\times d}$, where $\textit{P}_{p_h} (\mathcal{T}_h) = \Pi_{\kappa \in \mathcal{T}_h} \mathbb{P}_{p_k} (\kappa)$ and $\mathbb{P}_{\ell} (\kappa)$ is the space of piecewise polynomials in $\kappa$ of total degree less than or equal to $\ell \geq 1$. 
 We define an interface as the intersection of the ($d - 1$) - dimensional faces of any two neighboring
elements of $\mathcal{T}_h$. If $d = 2$, an
interface/face is a line segment and the set of all interfaces/faces is denoted by $\mathcal{F}_h$.
When $d = 3$, an interface can be a general polygon that we assume could be further decomposed into a set of planar triangles collected in the set $\mathcal{F}_h$.
We also decompose the set of faces as $\mathcal{F} = \mathcal{F}_h^I \cup \mathcal{F}_h^D  \cup \mathcal{F}_h^N$, where $\mathcal{F}_h^I$ contains the internal faces and $\mathcal{F}_h^D$ and $\mathcal{F}_h^N$ the faces of the Dirichlet and Neumann boundary, respectively. Following \cite{CangianiDongGeorgoulisHouston_2017}, we next introduce the main assumption on $\mathcal{T}_h$. 
\begin{definition}\label{def::polytopic_regular}
A mesh $\mathcal{T}_h$ is said to be \textit{polytopic-regular} if for any $ \kappa \in \mathcal{T}_h$, there exists a set of non-overlapping $d$-dimensional simplices contained in $\kappa$, denoted by $\{S_\kappa^F\}_{F\subset{\partial \kappa}}$, such that for any face $F\subset\partial \kappa$, it holds $h_\kappa\lesssim d|S_\kappa^F| \, |F|^{-1}$.
\end{definition}
\begin{assumption}
The sequence of meshes $\{\mathcal{T}_h\}_h$ is assumed to be \textit{uniformly} polytopic regular in the sense of  Definition ~\ref{def::polytopic_regular}. 
\label{ass::regular}
\end{assumption}
\noindent
As pointed out in \cite{CangianiDongGeorgoulisHouston_2017}, this assumption does not impose any restriction on either the number of faces per element or their measure relative to the diameter of the element they belong to.
In order to avoid technicalities, we also assume that $\mathcal{T}_h$ satisfies a \textit{hp-local bounded variation} property:
\begin{assumption}
For any pair of neighboring elements $\kappa^\pm\in\mathcal{T}_h$, it holds $h_{\kappa^+}\lesssim h_{\kappa^-}\lesssim h_{\kappa^+},\ \ p_{\kappa^+}\lesssim p_{\kappa^-}\lesssim p_{\kappa^+}$.
\label{ass::3}
\end{assumption}
\noindent Finally, for sufficiently piecewise smooth vector- and tensor-valued fields $\bm{v}$ and $\bm{\tau}$, respectively, and for any pair of neighboring elements $\kappa^+$ and $\kappa^-$ sharing a face $F\in \mathcal{F}_h^I$,  we introduce the jump and average operators
 $$
[[ \bm{v} ]]  = \bm{v}^+\otimes\bm{n}^++\bm{v}^-\otimes\bm{n}^-, \quad 
 [[\boldsymbol{\tau}]] = \boldsymbol{\tau}^+ \boldsymbol{n}^+ +  \boldsymbol{\tau}^- \mathbf{n}^-, 
 $$
  $$
 \{\!\{\mathbf{v}\}\!\} = \frac{ \mathbf{v}^+ +  \mathbf{v}^-}{2}, \quad 
 \{\!\{\boldsymbol{\tau}\}\!\} = \frac{\boldsymbol{\tau}^+  +  \boldsymbol{\tau}^-}{2}, \,\,
 $$
where $\otimes$ is the tensor product in $\mathbb{R}^d$, $\cdot^{\pm}$ denotes the trace on $F$ taken within $\kappa^\pm$, and $\bm{n}^\pm$ is the outer normal vector to $\partial \kappa^\pm$. 
Accordingly, on boundary faces  $F \in \mathcal{F}_h^D  \cup \mathcal{F}_h^N$, 
we set $[[\mathbf{u}]] = \mathbf{u}\otimes\bm{n}, \,\,
[[\boldsymbol{\tau}]] = \boldsymbol{\tau} \boldsymbol{n}, \,\, {\rm and }  \,\,
\{\!\{\mathbf{u}\}\!\} =  \mathbf{u}, \,\,
\{\!\{\bm{\tau}\}\!\} =  \bm{\tau}.$
In the following, we use $\nabla_h \cdot$ to denote the element-wise divergence operator, and we use the short-hand notation $(\cdot,\cdot)_{\mathcal{T}_h} = \sum_{\kappa\in \mathcal{T}_h}\int_{\kappa} \cdot$ and $ < \cdot,\cdot >_{\mathcal{F}_h} = \sum_{F\in \mathcal{F}_h}\int_{F} \cdot$.
We consider the following semi-discrete PolydG approximation to \eqref{eq:weak_stress}: for any $t\in (0,T]$, find $\bm \sigma_h(t) \in \bm V_h$ s.t.
\begin{equation}\label{eq:weak_dg}
   \begin{cases}
\mathcal{M}(\partial_t \bm \sigma_{h}, \bm \tau_h) + \mathcal{A} (\bm \sigma_h, \bm \tau_h) = F(\bm \tau_h) & \forall \, \bm \tau_h \in  \bm V_h,  \\
(\bm \sigma_h(0), \bm \tau_h) = (\bm \sigma_0, \bm \tau_h)  & \forall \, \bm \tau_h \in  \bm V_h,
   \end{cases}
\end{equation}
where for any $\bm \sigma, \bm \tau \in \bm V_h$ we have defined
\begin{eqnarray*}
    \mathcal{M}(\bm \sigma, \bm \tau) & = & ( \mu^{-1} \boldsymbol{{\rm dev}}(\boldsymbol{\sigma}), \boldsymbol{{\rm dev}}(\boldsymbol{\tau}) )_{\mathcal{T}_h},\\  
    \mathcal{A}(\bm \sigma, \bm \tau) & = & (\boldsymbol{\nabla}_h\cdot\boldsymbol{\sigma}, \boldsymbol{\nabla}_h\cdot\boldsymbol{\tau})_{\mathcal{T}_h} - \langle \{\!\{\boldsymbol{\nabla}_h\cdot\boldsymbol{\sigma}\}\!\} , [[\boldsymbol{\tau}\boldsymbol{n}]] \rangle_
    {\mathcal{F}_h^{I,N}} \\
    &  & - \langle \{\!\{\boldsymbol{\nabla}_h\cdot\boldsymbol{\tau}\}\!\} , [[\boldsymbol{\sigma}\boldsymbol{n}]] \rangle_
    {\mathcal{F}_h^{I,N}}    
    + \langle \gamma_e[[\boldsymbol{\sigma}\boldsymbol{n}]] , [[\boldsymbol{\tau}\boldsymbol{n}]] \rangle_
    {\mathcal{F}_h^{I,N}},  \\
    F(\bm \tau) & = &  (\boldsymbol{F}, \boldsymbol{\tau})_{\mathcal{T}_h} +
    \langle \boldsymbol{g}_D,  \boldsymbol{\tau}\boldsymbol{n} \rangle_{\mathcal{F}_h^D}
   +  \langle \boldsymbol{g}_N , \gamma_e \boldsymbol{\tau}\boldsymbol{n} + (\boldsymbol{\nabla}_h\cdot\boldsymbol{\tau})\rangle_{\mathcal{F}_h^N}.
\end{eqnarray*}
%
%
%
Here, $\mathcal{F}_h^{I,N} = \mathcal{F}_h^{I} \cup \mathcal{F}_h^{N}$ and the stabilization function $\gamma_e: \mathcal{F}_h^{I,N}\rightarrow\mathbb{R}_+$ is defined as
\begin{equation}\label{def:penalty}
    \gamma_e(\bm x) = \begin{cases}
        \alpha \max_{\kappa \in \{ \kappa^+,\kappa^-\}} \frac{p_\kappa^2}{h_\kappa}, & \bm x \in e, e \in \mathcal{F}^I_h, e \subset \partial \kappa^+ \cap \partial \kappa^-,  \\
        \alpha \frac{p_\kappa^2}{h_\kappa}, & \bm x \in e, e \in \mathcal{F}^N_h, e \subset \partial \kappa^+ \cap \partial \Gamma_N,
    \end{cases}
\end{equation}
being $\alpha>0$ the penalty coefficient at our disposal.
%
%

\subsection{Fully-discrete formulation}

We introduce a basis for the space $\bm V_h$ and express $\boldsymbol{\sigma}_h$ as a linear combination of these basis functions, where the unknown coefficients are functions of time.
We collect the latter in the vector $\underline{\bm \sigma}_h$, denote by $M$ (resp. $A$) the matrix representation of the bilinear form  $\mathcal{M}(\cdot,\cdot)$ (resp. $\mathcal{A}(\cdot, \cdot)$), and by $\underline{\bm f}$ the vector representation of the linear functional $F(\cdot)$. The algebraic formulation of \eqref{eq:weak_dg} reduces to: for any time $t\in(0,T]$, find $\underline{\bm \sigma}_h(t) \in \bm V_h$ s.t.
\begin{equation}\label{eq:ode_sigma}
    M\underline{\dot{\bm \sigma}}_h(t) + A\underline{\bm \sigma}_h(t) = \underline{\bm f}(t) \quad \forall t \in (0,T],
\end{equation}
with initial condition $\underline{\bm \sigma}_h(0) = \underline{\bm \sigma}_{0,h}$, being the latter the vector representation of the  $\bm L^2$-projection of $\bm \sigma_0$ onto  $\bm V_h$.  
In the following, to integrate in time \eqref{eq:ode_sigma} we use the $\theta$-method: for any $n=1,..., N_T$ find $\underline{\bm \sigma}_h^{n+1}$ such that
\begin{equation}\label{eq:tetametodo}
(M + \theta\Delta t A) \ \underline{\bm \sigma}_h^{n+1} = [M - (1 - \theta)\Delta t A] \ \underline{\bm \sigma}_h^{n} + \Delta t  [\theta\underline{\bm f}^{n+1} + (1 - \theta)\underline{\bm f}^{n}]
\end{equation}
with $N_T =\Delta t/T$, $\theta \in [0,1]$ and where the superscript $n$ means the approximation/evaluation of the given quantity at time $t_n = n \Delta t$, $n=0,..., N_T$.

\subsection{Stability analysis for the semi- and fully-discrete formulations} 
In the following, we present the stability analysis for both the semi- and fully-discrete problem as well as the \emph{a priori} error analysis. For the sake of presentation, we consider the case in which  $\boldsymbol{g}_N = \boldsymbol{g}_D = \bm 0$. 

\subsubsection{Stability analysis of the semi-discrete problem}

Before stating the main result of this section we recall that
\begin{align*}
    |\boldsymbol{\sigma}|_{dG}^2 & = \left\|\nabla_h \cdot \boldsymbol{\sigma} \right\|^2_\Omega + \left\| \ \gamma^{1/2}[[\boldsymbol{\sigma}\boldsymbol{n}]] \ \right\|^2_{\mathcal{F}_h^{I,N}} & \forall \boldsymbol{\sigma} \in \boldsymbol{V}_h,
\end{align*}
\begin{align*}
   \trinorm{\boldsymbol{\sigma}}_{dG}^2 & = |\boldsymbol{\sigma}|_{dG}^2 + \left\| \ \gamma^{-1/2}\{\{\bm{\nabla}_h \cdot \boldsymbol{\sigma}\}\} \ \right\|^2_{\mathcal{F}_h} & \forall \boldsymbol{\sigma} \in \boldsymbol{H}^2(\mathcal{T}_h).
\end{align*}
\begin{theorem}
For any time $t \in (0,T]$, let $\bm \sigma_h(t) \in \bm V_h$  be the solution of \eqref{eq:weak_dg}. Then, for $\alpha$ in \eqref{def:penalty}  sufficiently large, it holds
\begin{align*}
& \left\|\mu^{-\frac12}\boldsymbol{{\rm dev}}(\boldsymbol{\sigma}_h)\right\|_{L^{\infty}(0,T;\bm L^{2}{(\Omega)})}^{2} + 
\int_{0}^{T} | \bm \sigma_h |_{dG}^2 \, {\rm d} t \\
&\qquad \qquad \qquad \qquad \qquad \qquad \lesssim  \int_{0}^{T} \left\|\boldsymbol{F}\right\|_{\bm L^{2}(\Omega)}^{2} \, {\rm d} t + 
 \left\|\mu^{-\frac12}\boldsymbol{{\rm dev}}(\boldsymbol{\sigma}_{0,h})\right\|_{\bm L^{2}(\Omega)}^{2}.
\end{align*}
\end{theorem}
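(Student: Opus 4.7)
The plan is to mirror the proof of Theorem~\ref{thm:stability_continuous} in the discrete setting. I would first test the semi-discrete equation \eqref{eq:weak_dg} with $\bm \tau_h = \bm \sigma_h(t)$. The mass term $\mathcal{M}(\partial_t \bm \sigma_h, \bm \sigma_h)$ rewrites as $\tfrac{1}{2}\tfrac{d}{dt}\|\mu^{-1/2}\boldsymbol{{\rm dev}}(\bm \sigma_h)\|_{\bm L^2(\Omega)}^2$, and integration in time on $(0,t)$ will deliver the $L^\infty$-in-time control of the deviatoric part together with the initial data contribution.

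The key structural step is to show coercivity of $\mathcal{A}$ on $\bm V_h$, namely $\mathcal{A}(\bm \sigma_h, \bm \sigma_h) \gtrsim |\bm \sigma_h|_{dG}^2$ for $\alpha$ in \eqref{def:penalty} sufficiently large. Writing
$$
\mathcal{A}(\bm \sigma_h, \bm \sigma_h) = \|\nabla_h\cdot\bm\sigma_h\|_\Omega^2 + \|\gamma_e^{1/2}[[\bm\sigma_h\bm n]]\|_{\mathcal{F}_h^{I,N}}^2 - 2\langle\{\!\{\nabla_h\cdot\bm\sigma_h\}\!\},\, [[\bm\sigma_h\bm n]]\rangle_{\mathcal{F}_h^{I,N}},
$$
the consistency term is bounded via Cauchy--Schwarz and a $p$-robust inverse trace inequality on polytopic elements (valid under Assumption~\ref{ass::regular}), which gives $\|\gamma_e^{-1/2}\{\!\{\nabla_h\cdot\bm\sigma_h\}\!\}\|_{\mathcal{F}_h^{I,N}} \lesssim \alpha^{-1/2}\|\nabla_h\cdot\bm\sigma_h\|_\Omega$. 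A Young-type estimate then absorbs the cross term for $\alpha$ large enough, yielding the desired coercivity constant.

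The right-hand side contribution $F(\bm \sigma_h) = (\bm F, \bm \sigma_h)_{\mathcal{T}_h}$ must then be controlled by the discrete norm on the left. Since $\bm V_h$ is nonconforming, the continuous estimate \eqref{eq:devdiv} does not apply directly, and one needs a discrete analog of the form
$$
\|\bm\sigma_h\|_{\bm L^2(\Omega)} \lesssim \|\boldsymbol{{\rm dev}}(\bm\sigma_h)\|_{\bm L^2(\Omega)} + |\bm\sigma_h|_{dG}.
$$
Assuming this bound, Cauchy--Schwarz and Young's inequality split $F(\bm \sigma_h)$ into a term proportional to $\|\bm F\|_{\bm L^2(\Omega)}^2$, a piece absorbable by the coercivity lower bound $|\bm\sigma_h|_{dG}^2$, and a piece proportional to $\tfrac{1}{T}\|\mu^{-1/2}\boldsymbol{{\rm dev}}(\bm\sigma_h)\|_{\bm L^2(\Omega)}^2$, exactly as in the continuous proof. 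Integrating on $(0,t)$ and taking the supremum for $t\in(0,T]$ closes the estimate after rearrangement, with the factor involving $\mu T$ absorbed into the hidden constant since the statement here does not track it explicitly.

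The main obstacle is the discrete dev-div inequality above, because functions in $\bm V_h$ lack $\bm H({\rm div})$-conformity so Lemma~\ref{lemma:dev_div} cannot be invoked on $\bm \sigma_h$ itself. The standard remedy is to introduce an Oswald/averaging interpolant $\mathcal{I}_h\bm\sigma_h \in \bm H_{0,\Gamma_N}({\rm div},\Omega)$, apply \eqref{eq:devdiv} to it, and estimate the residual $\bm\sigma_h - \mathcal{I}_h\bm\sigma_h$ in $\bm L^2$ and in divergence by the normal jump seminorm $\|\gamma_e^{1/2}[[\bm\sigma_h\bm n]]\|_{\mathcal{F}_h^{I,N}}$; constructing such an operator on general polytopal meshes under Assumptions~\ref{ass::regular}--\ref{ass::3} is the most delicate ingredient of the argument.
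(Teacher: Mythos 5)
Your main argument coincides with the paper's: test \eqref{eq:weak_dg} with $\bm\tau_h=\bm\sigma_h(t)$, invoke the coercivity bound \eqref{eq:coercivity_dg} (which the paper imports from the literature rather than re-deriving, though your inverse-trace sketch is exactly the standard proof), bound the load term through a discrete analogue of the dev--div inequality, and conclude with the same integration-in-time and supremum argument as in Theorem~\ref{thm:stability_continuous}. The one place where you genuinely diverge is the proof of that discrete inequality, which you correctly single out as the crux but leave open, proposing an Oswald-type averaging operator into $\bm H_{0,\Gamma_N}({\rm div},\Omega)$ on polytopal meshes. The paper's Lemma~\ref{lemma:sigma_l2} avoids any conforming reconstruction of $\bm\sigma_h$: as in the continuous Lemma~\ref{lemma:dev_div}, one picks $\bm v\in\bm H^1_{0,\Gamma_D}(\Omega)$ with $\boldsymbol{\nabla}\cdot\bm v={\rm tr}(\bm\sigma_h)$ and $\|\bm v\|_{\bm H^1(\Omega)}\lesssim\|{\rm tr}(\bm\sigma_h)\|_{L^2(\Omega)}$, writes $\|{\rm tr}(\bm\sigma_h)\|^2_{L^2(\Omega)}=d(\bm\sigma_h,\boldsymbol{\nabla}\bm v)_{\mathcal{T}_h}-d(\boldsymbol{{\rm dev}}(\bm\sigma_h),\boldsymbol{\nabla}\bm v)_{\mathcal{T}_h}$, and integrates the first term by parts element-wise; the resulting face terms $\langle[[\bm\sigma_h\bm n]],\bm v\rangle_{\mathcal{F}_h^{I,N}}$ are bounded by $\|\gamma^{1/2}[[\bm\sigma_h\bm n]]\|_{\mathcal{F}_h^{I,N}}\,\|\gamma^{-1/2}\bm v\|_{\mathcal{F}_h^{I,N}}$, and the second factor is controlled by $\|\bm v\|_{\bm H^1(\Omega)}$ via the \emph{continuous} local trace inequality applied to the $H^1$ function $\bm v$ (since $\gamma^{-1/2}\sim h^{1/2}/p$), so the jump seminorm enters exactly where your heuristic predicts but without any inverse estimate or interpolant acting on $\bm\sigma_h$. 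If you insist on your route you would have to actually construct the averaging operator with $p$-explicit bounds on general polytopal meshes, which is considerably more delicate than this direct argument; apart from that unproven ingredient, the rest of your proof is sound and matches the paper's.
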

\begin{proof}
We choose $\boldsymbol{\tau}_h = \boldsymbol{\sigma}_h(t)$  in \eqref{eq:weak_dg} and apply Lemma \eqref{eq:coercivity_dg}, Cauchy-Schwarz and Young inequalities to  obtain
\begin{align*}
    & \partial_t \left\|\mu^{-\frac12}\boldsymbol{{\rm dev}} (\boldsymbol{\sigma}_h)\right\|^2_{\bm L^2(\Omega)} +|\boldsymbol{\sigma}_h|_{dG}^2 \lesssim
 \frac{1}{\epsilon} \left\|\boldsymbol{F}\right\|^2_{\bm L^2(\Omega)} + \epsilon \left\|\boldsymbol{\sigma}_h\right\|^2_{\bm L^2(\Omega)}. 
\end{align*}
Then, the proof is obtained by applying the same arguments as the ones in the proof of Theorem \ref{thm:stability_continuous} and using the results in Lemma~\ref{lemma:sigma_l2}. 
\end{proof}

The well-posedness of the semi-discrete problem \eqref{eq:weak_dg} can be inferred in the framework of differential-algebraic equations (DAEs). The DAEs theory states that, if the matrix pencil $sM+A$ is nonsingular for some $s\neq 0$, then problem \eqref{eq:ode_sigma} admits a solution \cite{Brenan1996}. Thus, to establish existence it is enough to take $s=1$ and apply Lemmas \eqref{eq:coercivity_dg} and~\ref{lemma:sigma_l2} to obtain that $M+A$ is positive definite. Finally, uniqueness and well-posedness follow from the linearity and the a priori stability estimate.

\subsubsection{Stability analysis of the fully-discrete problem}
In this section, we present the stability analysis for the fully discrete problem (\ref{eq:tetametodo}). For the sake of presentation, here and in the following, we consider the implicit Euler method (i.e., $\theta = 1$) to integrate in time the system \eqref{eq:weak_dg}. A similar proof can be obtained for the general case where $\theta \in [\frac12, 1)$, see Remark \ref{rem:tetametodo}.
We start by considering (\ref{eq:tetametodo}) with $\theta = 1$: for any time $t_n$, for $n=1,..., N_T$ find $ \bm \sigma_h^n \in \bm V_h$ s.t.  
\begin{equation}\label{eq:semi_discrete_partial}
\begin{cases}
    \mathcal{M}(\bm{\sigma}_h^n -  \bm{\sigma}_h^{n-1}, \boldsymbol{\tau}_h ) + \Delta t  \mathcal{A}(\boldsymbol{\sigma}_h^n, \boldsymbol{\tau}_h) = \Delta t  ( \boldsymbol{F}^n, \boldsymbol{\tau}_h) &  \forall \bm \tau_h \in \bm V_h, \\
    \bm \sigma_h^0 = \bm \sigma_{0,h}. & 
\end{cases}
\end{equation}

\begin{theorem}\label{thm:stability_discrete}
Let $\bm \sigma_h^n \in \bm V_h$ be the solution of \eqref{eq:semi_discrete_partial} for $n=1,..., N_T$. Then, it holds
\begin{align*}
     & \max_{n \in \{1,,..,N_T\}} \left\|{\mu^{-\frac12}\rm dev}(\boldsymbol{\sigma}_h^n)\right\|^2_{\bm L^{2}(\Omega)} +  \Delta t \sum_{\ell=1}^{N} | \bm \sigma^\ell_h|_{dG}^2 \\ & \qquad \qquad \qquad \lesssim
      \left\|\mu^{-\frac12}\boldsymbol{{\rm dev}} (\boldsymbol{\sigma}_{0,h})\right\|^2_{\bm L^{2}(\Omega)} + \Delta t \sum_{\ell=1}^{N} \ \left\|\boldsymbol{F}^\ell\right\|^2_{\bm L^{2}(\Omega)}.
\end{align*}
\end{theorem}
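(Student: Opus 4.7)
The plan is to mimic the continuous stability argument from Theorem~\ref{thm:stability_continuous}, but replacing the time-derivative identity by its natural discrete counterpart for the implicit Euler scheme, and then summing in time.

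First I would test \eqref{eq:semi_discrete_partial} with $\boldsymbol{\tau}_h = \boldsymbol{\sigma}_h^n$. The key algebraic identity is
\begin{equation*}
\mathcal{M}(\boldsymbol{\sigma}_h^n - \boldsymbol{\sigma}_h^{n-1}, \boldsymbol{\sigma}_h^n) = \tfrac{1}{2}\bigl(\|\mu^{-\frac12}\mathbf{dev}(\boldsymbol{\sigma}_h^n)\|^2_{\bm L^2(\Omega)} - \|\mu^{-\frac12}\mathbf{dev}(\boldsymbol{\sigma}_h^{n-1})\|^2_{\bm L^2(\Omega)} + \|\mu^{-\frac12}\mathbf{dev}(\boldsymbol{\sigma}_h^n - \boldsymbol{\sigma}_h^{n-1})\|^2_{\bm L^2(\Omega)}\bigr),
\end{equation*}
which follows from $\mathcal{M}$ being a symmetric, non-negative bilinear form and the elementary identity $(a-b)a = \frac12(a^2 - b^2 + (a-b)^2)$. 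The $\mathcal{A}$-term is handled with the coercivity estimate referenced as \eqref{eq:coercivity_dg}, giving $\mathcal{A}(\boldsymbol{\sigma}_h^n,\boldsymbol{\sigma}_h^n) \gtrsim |\boldsymbol{\sigma}_h^n|_{dG}^2$ for $\alpha$ large enough.

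Next I would bound the right-hand side. By Cauchy--Schwarz, Lemma~\ref{lemma:sigma_l2} (the discrete analogue of the dev-div inequality), and Young's inequality with a parameter $\varepsilon$,
\begin{equation*}
\Delta t\,(\boldsymbol{F}^n, \boldsymbol{\sigma}_h^n)_{\mathcal{T}_h} \lesssim \Delta t\,\|\boldsymbol{F}^n\|_{\bm L^2(\Omega)}\bigl(\|\mathbf{dev}(\boldsymbol{\sigma}_h^n)\|_{\bm L^2(\Omega)} + |\boldsymbol{\sigma}_h^n|_{dG}\bigr) \leq \tfrac{\Delta t}{4\varepsilon}\|\boldsymbol{F}^n\|^2_{\bm L^2(\Omega)} + \varepsilon\Delta t\,|\boldsymbol{\sigma}_h^n|^2_{dG} + \varepsilon\Delta t\,\|\mu^{-\frac12}\mathbf{dev}(\boldsymbol{\sigma}_h^n)\|^2_{\bm L^2(\Omega)},
\end{equation*}
choosing $\varepsilon$ small enough to absorb the $|\boldsymbol{\sigma}_h^n|_{dG}^2$ contribution into the coercive term on the left. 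Dropping the non-negative $\|\mathbf{dev}(\boldsymbol{\sigma}_h^n - \boldsymbol{\sigma}_h^{n-1})\|^2$ increment and summing the resulting inequality for $\ell = 1,\dots,n$ with $n \leq N_T$ yields a telescoping of the dev-norms together with the cumulative dG-seminorm contribution, controlled by $\|\mu^{-\frac12}\mathbf{dev}(\boldsymbol{\sigma}_{0,h})\|^2_{\bm L^2(\Omega)} + \Delta t\sum_{\ell=1}^n \|\boldsymbol{F}^\ell\|^2_{\bm L^2(\Omega)}$ plus a residual term of the form $\Delta t\sum_{\ell=1}^n \|\mu^{-\frac12}\mathbf{dev}(\boldsymbol{\sigma}_h^\ell)\|^2_{\bm L^2(\Omega)}$.

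The final step is to handle this residual via the discrete Gronwall inequality (under the mild condition $\varepsilon \Delta t < 1$, which is automatic for $\Delta t$ below an $\varepsilon$-dependent threshold), obtaining an $n$-uniform bound on $\|\mu^{-\frac12}\mathbf{dev}(\boldsymbol{\sigma}_h^n)\|^2_{\bm L^2(\Omega)} + \Delta t\sum_{\ell=1}^n |\boldsymbol{\sigma}_h^\ell|_{dG}^2$ with the desired right-hand side; taking the maximum over $n \in \{1,\dots,N_T\}$ delivers the claim. The main subtlety I anticipate is the bookkeeping around the Young's-inequality parameter $\varepsilon$: it must simultaneously be small enough for the $|\boldsymbol{\sigma}_h^n|_{dG}^2$ term to be absorbed against the (hidden) coercivity constant, yet the resulting $\mathcal{O}(1/\varepsilon)$ prefactor on $\|\boldsymbol{F}^n\|^2$ has to be kept implicit in the $\lesssim$ to match the stated estimate. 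Everything else is routine once the discrete energy identity and Lemma~\ref{lemma:sigma_l2} are in hand.
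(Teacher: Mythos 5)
Your proposal is correct and follows essentially the same route as the paper's proof: the discrete energy identity from $(a-b)a=\tfrac12(a^2-b^2+(a-b)^2)$, the coercivity of $\mathcal{A}(\cdot,\cdot)$ from \eqref{eq:coercivity_dg}, and Lemma~\ref{lemma:sigma_l2} to control $\|\boldsymbol{\sigma}_h^\ell\|_{\bm L^2(\Omega)}$ on the right-hand side. The only divergence is the closing step: the paper absorbs the residual $\epsilon\,\Delta t\sum_{\ell}\|\mu^{-\frac12}\boldsymbol{{\rm dev}}(\boldsymbol{\sigma}_h^\ell)\|^2_{\bm L^2(\Omega)}$ by bounding it with $\epsilon\,T\max_\ell\|\mu^{-\frac12}\boldsymbol{{\rm dev}}(\boldsymbol{\sigma}_h^\ell)\|^2_{\bm L^2(\Omega)}$, taking the maximum over $k$ and choosing $\epsilon$ small relative to $T$ (no Gronwall and no constraint on $\Delta t$), whereas your discrete Gronwall step is also valid but introduces the avoidable restriction $\varepsilon\Delta t<1$.
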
 
\begin{proof}
We consider $\boldsymbol{\tau}_h = \boldsymbol{\sigma}_h^n$ in \eqref{eq:semi_discrete_partial} and sum over $1 \le \ell \le k $  to obtain
\begin{align}\label{eq:fully_discrete_partial}
\sum_{\ell=1}^{k}\mathcal{M}(\bm{\sigma}_h^\ell -  \bm{\sigma}_h^{\ell-1}, \boldsymbol{\sigma}_h^\ell ) + \Delta t \sum_{\ell=1}^{k} \mathcal{A}(\boldsymbol{\sigma}_h^\ell, \boldsymbol{\sigma}_h^\ell) = \Delta t \sum_{\ell=1}^{k} ( \boldsymbol{F}^\ell, \boldsymbol{\sigma}_h^\ell).
\end{align}
%
%
For the first term on the left-hand side, we use that $A(A-B) = \frac{1}{2} (A^2 + (A-B)^2 - B^2)$ to get 
\begin{equation*}
  \sum_{\ell=1}^{k}\mathcal{M}(\bm{\sigma}_h^\ell -  \bm{\sigma}_h^{\ell-1}, \boldsymbol{\sigma}_h^\ell ) \geq  \frac12\mathcal{M}(\bm{\sigma}_h^k, \boldsymbol{\sigma}_h^k )  - \frac12\mathcal{M}(\bm{\sigma}_{0,h}, \boldsymbol{\sigma}_{0,h}). 
\end{equation*}
Next, we use the above inequality together with 
the Cauchy-Schwarz and Young inequalities on the right-hand side of \eqref{eq:fully_discrete_partial} to obtain
\begin{align*}
&    \frac{1}{2} \left\|\mu^{-\frac12}\boldsymbol{{\rm dev}} (\boldsymbol{\sigma}_h^k)\right\|^2_{\bm L^2(\Omega)} +  \Delta t \sum_{\ell=1}^{k} \ \mathcal{A}(\boldsymbol{\sigma}_h^\ell, \boldsymbol{\sigma}_h^\ell) \\
    & \qquad \qquad \lesssim \frac{1}{2} \left\|\mu^{-\frac12}\boldsymbol{{\rm dev}} (\boldsymbol{\sigma}_{0,h})\right\|^2_{\bm L^2(\Omega)} + \frac{\Delta t}{2\epsilon} \sum_{\ell=1}^{k}  \ \left\|\boldsymbol{F}^\ell\right\|^2_{\bm L^2(\Omega)} + \frac{\epsilon \Delta t }{2} \sum_{\ell=1}^{k} \ \left\|\boldsymbol{\sigma}_h^\ell\right\|^2_{\bm L^2(\Omega)},
\end{align*}
for $\epsilon>0$. Then, by using the coercivity of $\mathcal{A}(\cdot, \cdot)$ in \eqref{eq:coercivity_dg} it holds
\begin{align}
& \left\|\mu^{-\frac12}\boldsymbol{{\rm dev}} (\boldsymbol{\sigma}_h^k)\right\|^2_{\bm L^2(\Omega)} + \Delta t \sum_{\ell=1}^{k} | \bm \sigma^\ell_h|_{dG}^2 
    \nonumber \\ & \qquad \qquad \lesssim   \left\|\mu^{-\frac12}\boldsymbol{{\rm dev}} (\boldsymbol{\sigma}_{0,h})\right\|^2_{\bm L^2(\Omega)} + \frac{\Delta t}{\epsilon} \sum_{\ell=1}^{k} \ \left\|\boldsymbol{F}^\ell\right\|^2_{\bm L^2(\Omega)} + \epsilon \Delta t \sum_{\ell=1}^{k} \ \left\|\boldsymbol{\sigma}_h^\ell\right\|^2_{\bm L^2(\Omega)}.
     \label{last}
\end{align}
Next, we apply Lemma~\ref{lemma:sigma_l2} to estimate the last term on the right-hand side, recalling that $k \Delta t = t_{k}$, and we obtain:
\begin{align*}
 \sum_{\ell=1}^{k} \left\|\boldsymbol{\sigma}_h^\ell\right\|_{\bm L^{2}(\Omega)}^{2} & \lesssim  \sum_{\ell=1}^{k}\Big(  \left\|\mu^{-\frac12}\boldsymbol{{\rm dev}}(\boldsymbol{\sigma}_h^\ell)\right\|_{\bm L^{2}(\Omega)}^{2} + | \bm \sigma_h^\ell|^2_{dG} \Big) \\
 & \leq  \Big( k  \max_{\ell \in \{1,...,k\}} \left\|\mu^{-\frac12}\boldsymbol{{\rm dev}}(\boldsymbol{\sigma}_h^\ell)\right\|_{L^{2}(\Omega)}^{2} +  \sum_{\ell=1}^{k}  | \bm \sigma_h^\ell|^2_{dG} \Big),
 \end{align*}
 so that 
 \begin{align*}
 \Delta t \sum_{\ell=1}^{k} \left\|\boldsymbol{\sigma}_h^\ell\right\|_{\bm L^{2}(\Omega)}^{2}     
& \lesssim  T\max_{\ell \in \{1,...,k\}} \left\|\mu^{-\frac12}\boldsymbol{{\rm dev}}(\boldsymbol{\sigma}_h^\ell)\right\|_{L^{2}(\Omega)}^{2} + \Delta t\sum_{\ell=1}^{k}  | \bm \sigma_h^\ell|^2_{dG}.
\end{align*}
\\
Coming back to (\ref{last}), we have 
\begin{align*}
& \left\|\mu^{-\frac12}\boldsymbol{{\rm dev}} (\boldsymbol{\sigma}_h^k)\right\|^2_{\bm L^2(\Omega)} + \Delta t \sum_{\ell=1}^{k} | \bm \sigma^\ell_h|_{dG}^2 
    \nonumber 
    \lesssim   \left\|\mu^{-\frac12}\boldsymbol{{\rm dev}} (\boldsymbol{\sigma}_{0,h})\right\|^2_{\bm L^2(\Omega)} + \frac{\Delta t}{\epsilon} \sum_{\ell=1}^{k} \ \left\|\boldsymbol{F}^\ell\right\|^2_{\bm L^2(\Omega)} \\ & \qquad \qquad  + \epsilon  \Big( T \max_{\ell \in \{1,...,k\}} \left\|\mu^{-\frac12}\boldsymbol{{\rm dev}}(\boldsymbol{\sigma}_h^\ell)\right\|_{L^{2}(\Omega)}^{2} + \Delta t \sum_{\ell=1}^{k}  | \bm \sigma_h^\ell|^2_{dG} \Big).
\end{align*}
%
%
The thesis follows by taking the maximum over $k \in \{1,...,N_T\}$ and $\epsilon$ small enough.
%
\end{proof}

\begin{remark}\label{rem:tetametodo}
To obtain the stability estimate as in Theorem \ref{thm:stability_discrete} for the general case $\theta \in [\frac12, 1)$, one has to consider the following equation
\begin{multline*}
    \mathcal{M}(\bm{\sigma}_h^n -  \bm{\sigma}_h^{n-1}, \bm \tau_h )    + \Delta t  \mathcal{A}(\theta \bm \sigma^n_h + (1-\theta) \bm \sigma^{n-1}_h,\bm \tau_h)  \\ = \Delta t  ( \theta \boldsymbol{F}^n + (1-\theta) \boldsymbol{F}^{n-1},\bm \tau_h) ,
\end{multline*}
for $n=1,..., N_T$. Taking $\bm \tau_h = \theta \bm \sigma^n_h + (1-\theta) \bm \sigma^{n-1}_h$, we observe that 
\begin{align*}
    \mathcal{M}(\bm{\sigma}_h^n -  \bm{\sigma}_h^{n-1}, \theta \bm \sigma^n_h + (1-\theta) \bm \sigma^{n-1}_h  ) & = \frac12\mathcal{M}(\bm{\sigma}_h^n, \bm \sigma^n_h) - \frac12\mathcal{M}(\bm{\sigma}_h^{n-1}, \bm \sigma^{n-1}_h)     \\ & \quad + (\theta -\frac12)\mathcal{M}(\bm{\sigma}_h^n-\bm{\sigma}_h^{n-1}, \bm{\sigma}_h^n-    \bm \sigma^{n-1}_h)  \\
    & \geq \frac12\mathcal{M}(\bm{\sigma}_h^n, \bm \sigma^n_h) - \frac12\mathcal{M}(\bm{\sigma}_h^{n-1}, \bm \sigma^{n-1}_h),
\end{align*}
for $\theta \geq \frac12$, use the coercivity of $\mathcal{A}(\cdot,\cdot)$, and proceed as in the proof of Theorem~\ref{thm:stability_discrete}.
\end{remark}

\section{Error analysis of the fully-discrete problem} \label{sec:convergence}
Before presenting the main results of this section, we introduce some preliminary results that are instrumental for the proof of the \emph{a priori} error estimates  \cite{Antonietti2021,Botti2020_korn}.
Let  $\mathcal{E}: \boldsymbol{H}^m(\kappa) \rightarrow \boldsymbol{H}^m(\mathbb{R}^d)$ be the Stein extension operator s.t. for any $\kappa \in \mathcal{T}_h$ and $m \in \mathbb{N}_0$,  $\mathcal{E} \boldsymbol{\sigma} |_\kappa = \boldsymbol{\sigma}$ and $\left\|\mathcal{E} \boldsymbol{\sigma}\right\|_{\bm H^m(\mathbb{R}^{d\times d})} \lesssim \left\| \boldsymbol{\sigma}\right\|_{\bm H^m(\kappa)} $.
Then, for any $\boldsymbol{\sigma} \in \boldsymbol{H}^{m}(\mathcal{T}_h)$, with $m \ge 2$, there exists $\boldsymbol{\pi} \boldsymbol{\sigma} \in \boldsymbol{V}_h$ such that
\begin{equation}\label{eq:interp_sigma}
    \left\| \boldsymbol{\sigma} - \boldsymbol{\pi \sigma}\right\|_{\bm L^2(\Omega)}^2 \lesssim \sum_{\kappa\in \mathcal{T}_h} \frac{h_k^{2s_k}}{p_k^{2m}} \left\|\mathcal{E}\boldsymbol{\sigma}\right\|^2_{\bm H^{m} (\kappa)}, 
\end{equation}

\begin{equation} \label{eq:estimate_sigma}  \trinorm{\boldsymbol{\sigma} - \boldsymbol{\pi \sigma}}_{dG}^2  \lesssim \sum_{\kappa\in \mathcal{T}_h} \frac{h_k^{2s_k-2}}{p_k^{2m-3}} \left\|\mathcal{E}\boldsymbol{\sigma}\right\|^2_{\bm H^{m} (\kappa)},
\end{equation}
where $s_k = \min\{p_k+1,m\}$. 
%
\begin{lemma}\label{lemma:dev_bound}
Under Assumptions \ref{ass::regular}-\ref{ass::3}  we suppose that the solution $\boldsymbol{\sigma}$ to (\ref{eq:eqnn}) is regular enough, i.e. $\boldsymbol{\sigma} \in C^1((0,T];\boldsymbol{H}^{m}(\mathcal{T}_h))\cap C^2((0,T];\bm L^2(\Omega))$, with $m \ge 2$. Then, for any $t_n = n\Delta t$ with $n = 1,...,N_T$, it holds 
\begin{align*}
 \left\|\mu^{-\frac12}\boldsymbol{{\rm dev}}(\dot{\bm \sigma}^n -\bm \pi \dot{\bm \sigma}^n) \right\|^2_{\bm L^2(\Omega)} & \lesssim  \sum_{\kappa\in \mathcal{T}_h} \frac{h_k^{2s_k}}{p_k^{2m}} \left\|\mathcal{E}\dot{\boldsymbol{\sigma}}^n\right\|^2_{\bm H^{m} (\kappa)}
 \\ 
 \left\| \mu^{-\frac12}\boldsymbol{{\rm dev}}\left(\bm \pi \dot{\bm\sigma}^n - \frac{\bm \pi \bm\sigma^n - \bm \pi \bm\sigma^{n-1}}{\Delta t }\right)\right\|^2_{\bm L^2(\Omega)} & \lesssim \Delta t^2  \left\|\bm\sigma \right\|^2_{C^2((0,T];\bm L^2(\Omega))},
\end{align*} 
where $\bm \pi$ is defined as before.
\end{lemma}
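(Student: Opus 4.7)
The lemma splits into two largely independent estimates, and each one reduces to a simple chain of continuity bounds once the right identities are set up. My plan is to treat them in order, factoring out the deviatoric operator first in both cases and then exploiting, respectively, the elementwise approximation estimate \eqref{eq:interp_sigma} and a Taylor expansion with integral remainder.

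\textbf{Part 1 (spatial interpolation bound).} The deviator ${\rm \textbf{dev}}(\boldsymbol{\tau}) = \boldsymbol{\tau} - \frac{1}{d}{\rm tr}(\boldsymbol{\tau})\mathbb{I}_d$ is a bounded linear operator on $\bm L^2(\Omega)$ with norm depending only on $d$, so that for any tensor field $\boldsymbol{\tau} \in \bm L^2(\Omega)$ one has $\|{\rm \textbf{dev}}(\boldsymbol{\tau})\|_{\bm L^2(\Omega)} \lesssim \|\boldsymbol{\tau}\|_{\bm L^2(\Omega)}$. Applying this with $\boldsymbol{\tau} = \dot{\bm\sigma}^n - \bm\pi\dot{\bm\sigma}^n$, together with the constant factor $\mu^{-1/2}$ (which is absorbed into the hidden constant, as announced at the end of Section~2.1), gives
\begin{equation*}
\left\|\mu^{-\frac12}{\rm \textbf{dev}}(\dot{\bm\sigma}^n - \bm\pi\dot{\bm\sigma}^n)\right\|^2_{\bm L^2(\Omega)} \lesssim \left\|\dot{\bm\sigma}^n - \bm\pi\dot{\bm\sigma}^n\right\|^2_{\bm L^2(\Omega)}.
\end{equation*}
The first estimate then follows immediately by invoking \eqref{eq:interp_sigma} applied to $\dot{\bm\sigma}^n$, which is admissible since $\bm\sigma \in C^1((0,T];\bm H^m(\mathcal{T}_h))$ by assumption.

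\textbf{Part 2 (temporal consistency bound).} Exploiting the linearity of the interpolation operator $\bm \pi$, I rewrite the argument of the deviator as
\begin{equation*}
\bm\pi\dot{\bm\sigma}^n - \frac{\bm\pi\bm\sigma^n - \bm\pi\bm\sigma^{n-1}}{\Delta t} = \bm\pi\!\left(\dot{\bm\sigma}^n - \frac{\bm\sigma^n - \bm\sigma^{n-1}}{\Delta t}\right).
\end{equation*}
Using boundedness of the deviator on $\bm L^2(\Omega)$ and the $\bm L^2$-stability of $\bm\pi$ (which follows from \eqref{eq:interp_sigma} by triangle inequality, since $\|\bm\pi\boldsymbol{\tau}\|_{\bm L^2} \le \|\boldsymbol{\tau}-\bm\pi\boldsymbol{\tau}\|_{\bm L^2} + \|\boldsymbol{\tau}\|_{\bm L^2} \lesssim \|\mathcal{E}\boldsymbol{\tau}\|_{\bm L^2}$), the squared quantity on the left-hand side is controlled by $\|\dot{\bm\sigma}^n - \Delta t^{-1}(\bm\sigma^n - \bm\sigma^{n-1})\|^2_{\bm L^2(\Omega)}$. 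A Taylor expansion of $\bm\sigma(t_{n-1})$ around $t_n$ with integral remainder yields
\begin{equation*}
\dot{\bm\sigma}(t_n) - \frac{\bm\sigma(t_n) - \bm\sigma(t_{n-1})}{\Delta t} = \frac{1}{\Delta t}\int_{t_{n-1}}^{t_n}(s - t_{n-1})\,\ddot{\bm\sigma}(s)\,{\rm d}s,
\end{equation*}
which, using the triangle inequality in the Bochner sense and the fact that $\bm\sigma \in C^2((0,T];\bm L^2(\Omega))$, is bounded in $\bm L^2(\Omega)$ by $\frac{\Delta t}{2}\,\|\bm\sigma\|_{C^2((0,T];\bm L^2(\Omega))}$. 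Squaring gives the claimed $\Delta t^2$ factor.

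\textbf{Main obstacle.} There is no real technical obstacle: both estimates reduce to standard tools (continuity of ${\rm \textbf{dev}}$, approximation properties of $\bm \pi$, Taylor expansion). The only point that requires a little care is justifying the $\bm L^2$-stability of the interpolation operator $\bm \pi$, since the paper states only the approximation estimate \eqref{eq:interp_sigma}; I would dispatch this by the triangle-inequality argument sketched above, possibly citing the analogous result in \cite{Antonietti2021,Botti2020_korn} for completeness.
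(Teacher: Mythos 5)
Your proof is correct and follows essentially the same route as the paper: boundedness of $\boldsymbol{{\rm dev}}$ on $\bm L^2$ plus \eqref{eq:interp_sigma} for the first estimate, and linearity/$\bm L^2$-stability of $\bm\pi$ plus a second-order time expansion for the second. The only difference is that the paper applies the Lagrange mean value theorem twice to $t\mapsto\bm\pi\bm\sigma(t)$, whereas you use the Taylor formula with integral remainder; your version is in fact slightly cleaner, since the equality form of the mean value theorem is delicate for Banach-space-valued functions, and your concern about justifying the $\bm L^2$-stability of $\bm\pi$ is legitimate but harmless (it is an $L^2$-type projection in the cited references, and the paper itself uses this stability implicitly when bounding $\|\mu^{-1/2}\bm\pi\ddot{\bm\sigma}(\zeta)\|_{\bm L^2(\Omega)}$ by $\|\bm\sigma\|_{C^2((0,T];\bm L^2(\Omega))}$).
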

\begin{proof}
The first inequality follows from the definition of the $\boldsymbol{{\rm dev}}$ operator and \eqref{eq:interp_sigma}, i.e.,
\begin{equation*}
    \left\|\mu^{-\frac12}(\boldsymbol{{\rm dev}}(\dot{\bm \sigma}^n) - \boldsymbol{{\rm dev}}(\bm \pi \dot{\bm \sigma}^n) ) \right\|^2_{\bm L^2(\Omega)} \lesssim \left\|\mu^{-\frac12}(\dot{\bm \sigma}^n - \bm \pi \dot{\bm \sigma}^n)\right\|^2_{\bm L^2(\Omega)}.
\end{equation*}
For the second inequality, we employ again the definition of $\boldsymbol{{\rm dev}}$ and twice the Lagrange theorem to obtain the existence of $\xi,\zeta \in (t_{n-1},t_n)$ such that
\begin{align*}
 \left\| \mu^{-\frac12}\boldsymbol{{\rm dev}}\left(\bm \pi \dot{\bm\sigma}^n - \frac{\bm \pi \bm\sigma^n - \bm \pi \bm\sigma^{n-1}}{\Delta t }\right)\right\|^2_{\bm L^2(\Omega)} & \lesssim  
 \left\| \mu^{-\frac12}(\bm \pi \dot{\bm\sigma}(t_n) - \bm \pi \dot{\bm\sigma}(\xi))\right\|^2_{\bm L^2(\Omega)} \\
 & \lesssim \Delta t^2 \left\| \mu^{-\frac12} \bm \pi \ddot{\bm\sigma}(\zeta) \right\|^2_{\bm L^2(\Omega)}. 
\end{align*}
Owing to the regularity assumption on the exact solution $\bm\sigma$, this concludes the proof.
\end{proof}
%
%
We observe that for any $n=1,...,N_T$,  $\boldsymbol{\sigma}^n \in \bm H({\rm div},\Omega)$ solves the following problem:
 \begin{equation}\label{weakform1}
 \mathcal{M}(\dot{\boldsymbol{\sigma}}^n,\boldsymbol{\tau}_h) + \mathcal{A}(\boldsymbol{\sigma}^n, \boldsymbol{\tau}_h) = ( \boldsymbol{F}^n , \boldsymbol{\tau}_h )_\Omega \quad \boldsymbol{\tau}_h \in \boldsymbol{V}_h.
\end{equation}
Then, we  consider the error equation obtained by subtracting \eqref{weakform1} from \eqref{eq:semi_discrete_partial}:
\begin{align*}
 \mathcal{M}( \bm\sigma_h^n - \bm\sigma_h^{n-1},\boldsymbol{\tau}_h) -  \Delta t \mathcal{M}( \dot{\bm\sigma}^n ,\boldsymbol{\tau}_h) + \Delta t\mathcal{A}(\boldsymbol{e}^n, \boldsymbol{\tau}_h) = 0,
\end{align*}
where we have set $\bm e^n = \boldsymbol{\sigma}_h^n - \boldsymbol{\sigma}^n$.
By adding and subtracting the term $\mathcal{M}( \bm\sigma^n - \bm\sigma^{n-1},\boldsymbol{\tau}_h)$, we can rewrite the above equation as
\begin{align}\label{eq:error_equation}
 \mathcal{M}( \bm e^n - \bm e^{n-1},\boldsymbol{\tau}_h)  + \Delta t\mathcal{A}(\boldsymbol{e}^n, \boldsymbol{\tau}_h) =   \Delta t \mathcal{M}( \dot{\bm\sigma}^n ,\boldsymbol{\tau}_h) - \mathcal{M}( \bm\sigma^n - \bm\sigma^{n-1},\boldsymbol{\tau}_h),
\end{align}
for all $\bm \tau_h \in \bm V_h$. 
In the following, we use the notation
\begin{equation*}
  \| \bm e \|_{E}^2 =  \max_{n=\{1,...,N_T\}}\left\| \mu^{-\frac12}\boldsymbol{{\rm dev}} (\boldsymbol{e}^n) \right\|^2_{\bm L^2(\Omega)} + \Delta t  \sum_{n=1}^{N_T} |\bm e^n|_{dG}^2, 
\end{equation*}
    with $\bm e(t) = \bm \sigma(t) - \bm \sigma_h(t)$ and $\bm e^n = \bm \sigma(t_n) - \bm \sigma_h^n$.
We next state the main result of this section.

\begin{theorem}\label{thm:convergence}
Let $\boldsymbol{\sigma} \in C^1((0,T];\boldsymbol{H}^{m}(\mathcal{T}_h))\cap C^2((0,T];\bm L^2(\Omega))$, with $m\geq2$, be the solution of \eqref{weakform1}, and let $\bm \sigma_h^n \in \bm V_h$, for $n=1,...,N_T$, be the solution of \eqref{eq:semi_discrete_partial} for a sufficiently large $\alpha$. Then, under Assumptions \ref{ass::regular}-\ref{ass::3} it holds 
\begin{align*}
  & 
   \|  \bm \sigma - \bm \sigma_h \|_{E}^2 
    \lesssim 
\max_{n \in \{1,...,N_T\}} \Big( \sum_{\kappa\in \mathcal{T}_h} \frac{h_k^{2s_k-2}}{p_k^{2m-3}} \left\|\mathcal{E}\boldsymbol{\sigma}^n\right\|^2_{\bm H^{m} (\kappa)}
   + \sum_{\kappa\in \mathcal{T}_h} \frac{h_k^{2s_k}}{p_k^{2m}} \left\|\mathcal{E}\dot{\boldsymbol{\sigma}^n}\right\|^2_{\bm H^{m} (\kappa)} \\ &\qquad\qquad\qquad\qquad \qquad\qquad +  \sum_{\kappa\in \mathcal{T}_h} \frac{h_k^{2s_k}}{p_k^{2m}} \left\|\mathcal{E}{\boldsymbol{\sigma}^n}\right\|^2_{\bm H^{m} (\kappa)} \Big)  +
     \Delta t^2 \left\|\bm\sigma \right\|^2_{C^2((0,T];\bm L^2(\Omega))}.
\end{align*}
\end{theorem}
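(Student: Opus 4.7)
The plan is to carry out a standard split of the error via the interpolant $\bm{\pi}$ introduced before Lemma~\ref{lemma:dev_bound}: write $\bm{e}^n = \bm{\sigma}^n-\bm{\sigma}_h^n = \bm{\eta}^n + \bm{\xi}^n$ where $\bm{\eta}^n = \bm{\sigma}^n-\bm{\pi}\bm{\sigma}^n$ is the interpolation error and $\bm{\xi}^n = \bm{\pi}\bm{\sigma}^n-\bm{\sigma}_h^n \in \bm{V}_h$ is the discrete contribution. By the triangle inequality on $\|\cdot\|_E$, the estimate reduces to bounding $\|\bm{\eta}\|_E$ and $\|\bm{\xi}\|_E$ separately. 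The interpolation estimates \eqref{eq:interp_sigma}-\eqref{eq:estimate_sigma} directly yield the $h_k^{2s_k}/p_k^{2m}$ and $h_k^{2s_k-2}/p_k^{2m-3}$ contributions for $\|\bm{\eta}\|_E$, after observing that $\|\mu^{-1/2}\boldsymbol{\mathrm{dev}}(\bm{\eta}^n)\|_{\bm L^2} \lesssim \|\bm{\eta}^n\|_{\bm L^2}$ and $|\bm{\eta}^n|_{dG} \le \trinorm{\bm{\eta}^n}_{dG}$.

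Next I would derive an evolution equation for $\bm{\xi}^n$ by inserting $\bm{e}^n = \bm{\eta}^n + \bm{\xi}^n$ into the error equation \eqref{eq:error_equation} and testing with $\bm{\tau}_h = \bm{\xi}^n \in \bm{V}_h$. Moving $\bm{\eta}^n$-contributions to the right-hand side, this produces
\begin{equation*}
\mathcal{M}(\bm{\xi}^n-\bm{\xi}^{n-1},\bm{\xi}^n) + \Delta t \, \mathcal{A}(\bm{\xi}^n,\bm{\xi}^n) = \mathcal{R}_{\mathrm{int}}^n + \mathcal{R}_{\mathrm{time}}^n - \Delta t\, \mathcal{A}(\bm{\eta}^n,\bm{\xi}^n),
\end{equation*}
where $\mathcal{R}_{\mathrm{int}}^n = -\mathcal{M}(\bm{\eta}^n-\bm{\eta}^{n-1},\bm{\xi}^n)$ is the interpolated-time-increment term and $\mathcal{R}_{\mathrm{time}}^n = \Delta t\,\mathcal{M}(\dot{\bm{\sigma}}^n,\bm{\xi}^n) - \mathcal{M}(\bm{\sigma}^n-\bm{\sigma}^{n-1},\bm{\xi}^n)$ is the backward-Euler consistency term. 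The key algebraic rearrangement is to combine $\mathcal{R}_{\mathrm{int}}^n$ and $\mathcal{R}_{\mathrm{time}}^n$ into
\begin{equation*}
\Delta t\,\mathcal{M}\!\left(\dot{\bm{\sigma}}^n - \bm{\pi}\dot{\bm{\sigma}}^n,\bm{\xi}^n\right) + \Delta t\,\mathcal{M}\!\left(\bm{\pi}\dot{\bm{\sigma}}^n - \frac{\bm{\pi}\bm{\sigma}^n-\bm{\pi}\bm{\sigma}^{n-1}}{\Delta t},\bm{\xi}^n\right),
\end{equation*}
which is precisely the form to which the two bounds in Lemma~\ref{lemma:dev_bound} apply.

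I would then sum the identity over $n=1,\dots,k$ and, mirroring the proof of Theorem~\ref{thm:stability_discrete}, use the telescoping identity $2A(A-B) \ge A^2 - B^2$ on the mass term together with the coercivity of $\mathcal{A}$ (invoked via \eqref{eq:coercivity_dg}) to obtain
\begin{equation*}
\|\mu^{-1/2}\boldsymbol{\mathrm{dev}}(\bm{\xi}^k)\|^2_{\bm L^2(\Omega)} + \Delta t \sum_{n=1}^{k}|\bm{\xi}^n|_{dG}^2 \lesssim \|\bm{\xi}^0\|^2 + \sum_{n=1}^{k}(\mathrm{RHS}_n).
\end{equation*}
Each right-hand-side contribution is estimated with Cauchy–Schwarz and Young inequalities: the $\bm{\eta}^n$ term gives $\Delta t \sum |\bm{\eta}^n|_{dG} |\bm{\xi}^n|_{dG}$ bounded by \eqref{eq:estimate_sigma} plus an absorbable $\epsilon$-term, Lemma~\ref{lemma:dev_bound} handles the two mass-type terms producing the $h_k^{2s_k}/p_k^{2m}\|\mathcal{E}\dot{\bm{\sigma}}^n\|^2$ and $\Delta t^2\|\bm{\sigma}\|_{C^2((0,T];\bm L^2(\Omega))}^2$ summands, and the remaining $\|\bm{\xi}^n\|_{\bm L^2}$-type terms arising from absorption are controlled via Lemma~\ref{lemma:sigma_l2} exactly as in Theorem~\ref{thm:stability_discrete}. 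Finally, taking the maximum over $k$, choosing $\epsilon$ small enough to absorb $\epsilon\|\bm{\xi}\|_E^2$ into the left-hand side, and combining with the interpolation bound on $\bm{\eta}$ yields the claim, after noting that $\bm{\xi}^0 = \bm{\pi}\bm{\sigma}_0 - \bm{\sigma}_{0,h}$ is controlled by the $\bm L^2$-projection and \eqref{eq:interp_sigma} at $t=0$.

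The main obstacle I expect is the bookkeeping required to combine three heterogeneous sources of error — the spatial interpolation of $\bm{\sigma}$ (through $\mathcal{A}(\bm{\eta}^n,\bm{\xi}^n)$), the spatial interpolation of $\dot{\bm{\sigma}}$ (through the first Lemma~\ref{lemma:dev_bound} bound), and the backward-Euler truncation (through the second one) — while keeping constants independent of $h$, $p$, $\Delta t$, $\mu$ and $T$. Particular care will be needed at the step where $\|\bm{\xi}^n\|_{\bm L^2}^2$ arising from the absorption is re-expressed via Lemma~\ref{lemma:sigma_l2} in terms of $\|\mu^{-1/2}\boldsymbol{\mathrm{dev}}(\bm{\xi}^n)\|^2 + |\bm{\xi}^n|_{dG}^2$, since the maximum norm over time steps must be controlled consistently with the discrete Grönwall-free argument already used in Theorem~\ref{thm:stability_discrete}.
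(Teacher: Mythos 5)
Your proposal follows essentially the same route as the paper: the same splitting of the error into an interpolation part $\bm\sigma-\bm\pi\bm\sigma$ and a discrete part in $\bm V_h$, testing the error equation with the discrete part, adding and subtracting $\Delta t\,\mathcal{M}(\bm\pi\dot{\bm\sigma}^n,\cdot)$ so that the two bounds of Lemma~\ref{lemma:dev_bound} apply, and then the telescoping identity, coercivity, continuity, and Young absorption exactly as in Theorem~\ref{thm:stability_discrete}. The only (harmless) deviations are that the paper takes $\bm e_h^0=\bm 0$ outright rather than bounding it, and that it never actually needs Lemma~\ref{lemma:sigma_l2} in this proof since all right-hand-side terms are already expressed in the $\boldsymbol{\mathrm{dev}}$- and $|\cdot|_{dG}$-norms.
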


\begin{proof}
For any $n=1,...,N_T$ we split the error in the following way: $\boldsymbol{e}^n = \boldsymbol{e}_h^n - \boldsymbol{e}_I^n$, where
$   \boldsymbol{e}_h^n = \boldsymbol{\sigma}_h^n - \boldsymbol{\pi} \boldsymbol{\sigma}^n$ and $
   \boldsymbol{e}_I^n = \boldsymbol{\sigma}^n - \boldsymbol{\pi} \boldsymbol{\sigma}^n$.
Next, by Young's inequality we get 
\begin{equation}\label{eq:triangle_thm}
     \|  \bm \sigma - \bm \sigma_h \|_{E}^2\lesssim  \|  \bm \sigma - \bm \pi \bm \sigma \|_{E}^2  +  \|  \bm \sigma_h - \bm \pi \bm \sigma \|_{E}^2.
\end{equation}
To bound the first term above we use both \eqref{eq:interp_sigma} and \eqref{eq:estimate_sigma}, obtaining
\begin{align}\label{eq:estimate_pi}
    \|  \bm \sigma - \bm \pi \bm \sigma \|_{E}^2 \lesssim  
    \max_{n \in \{1,...,N_T\}} \Big( \sum_{\kappa\in \mathcal{T}_h} \frac{h_k^{2s_k}}{p_k^{2m}} \left\|\mathcal{E}\boldsymbol{\sigma}^n\right\|^2_{\bm H^{m} (\kappa)}
   + \sum_{\kappa\in \mathcal{T}_h} \frac{h_k^{2 s_k-2}}{p_k^{2m-3}} \left\|\mathcal{E}{\boldsymbol{\sigma}^n}\right\|^2_{\bm H^{m} (\kappa)} \Big).
\end{align}
For the second term in \eqref{eq:triangle_thm} we 
choose $\boldsymbol{\tau}_h = \boldsymbol{e}_h^n$ in \eqref{eq:error_equation} and obtain:
\begin{align*}
 \mathcal{M}( \bm e_h^n - \bm e_h^{n-1},\boldsymbol{e}_h^n)  + \Delta t\mathcal{A}(\boldsymbol{e}_h^n, \boldsymbol{e}_h^n) & =   \Delta t \mathcal{M}( \dot{\bm\sigma}^n ,\boldsymbol{e}_h^n) - \mathcal{M}( \bm\sigma^n - \bm\sigma^{n-1},\boldsymbol{e}_h^n)\\ & 
 \qquad + \mathcal{M}( \bm e_I^n - \bm e_I^{n-1},\boldsymbol{e}_h^n)  + \Delta t\mathcal{A}(\boldsymbol{e}_I^n, \boldsymbol{e}_h^n)
\end{align*}
that is 
\begin{align*}
 \mathcal{M}( \bm e_h^n - \bm e_h^{n-1},\boldsymbol{e}_h^n)  + \Delta t\mathcal{A}(\boldsymbol{e}_h^n, \boldsymbol{e}_h^n) & =   \Delta t \mathcal{M}( \dot{\bm\sigma}^n ,\boldsymbol{e}_h^n) - \mathcal{M}(\bm \pi \bm\sigma^n - \bm \pi \bm\sigma^{n-1},\boldsymbol{e}_h^n)\\ & 
 \qquad + \Delta t\mathcal{A}(\boldsymbol{e}_I^n, \boldsymbol{e}_h^n).
\end{align*}
Next, we add and subtract the term $\Delta t\mathcal{M}(\bm \pi \dot{\bm\sigma}^n,\boldsymbol{e}_h^n)$ and sum over $1 \le \ell \le k $,  to get
\begin{align*}
\sum_{\ell=1}^{k} \Big( \mathcal{M}( \bm e_h^\ell - \bm e_h^{\ell-1},\boldsymbol{e}_h^\ell)  + \Delta t\mathcal{A}(\boldsymbol{e}_h^\ell, \boldsymbol{e}_h^\ell)\Big) & = \sum_{\ell=1}^{k} \Big(  \Delta t\mathcal{A}(\boldsymbol{e}_I^\ell, \boldsymbol{e}_h^\ell) + \Delta t \mathcal{M}( \dot{\bm e}_I^\ell,\boldsymbol{e}_h^\ell) \\ & \qquad + \mathcal{M}(\Delta t\bm \pi \dot{\bm\sigma}^\ell - (\bm \pi \bm\sigma^\ell - \bm \pi \bm\sigma^{\ell-1}),\boldsymbol{e}_h^\ell)\Big).
\end{align*}
As in the proof of Theorem \ref{thm:stability_continuous} we use  that 
$A(A-B) = \frac{1}{2} (A^2 + (A-B)^2 - B^2)$, together with the coercivity of $\mathcal{A}(\cdot, \cdot)$ in \eqref{eq:coercivity_dg} with $\alpha$ large enough, obtaining:
\begin{align*}
    & \left\| \mu^{-\frac12}\boldsymbol{{\rm dev}} (\boldsymbol{e}_h^k) \right\|^2_{\bm L^2(\Omega)} + \Delta t  \sum_{\ell=1}^{k} |\bm e_h^\ell|_{dG}^2 \\ & \qquad \lesssim
\Delta t \sum_{\ell=1}^{k} \Big(  \mathcal{A}(\boldsymbol{e}_I^\ell, \boldsymbol{e}_h^\ell) +  \mathcal{M}( \dot{\bm e}_I^\ell,\boldsymbol{e}_h^\ell) +  \mathcal{M}( \bm \pi \dot{\bm\sigma}^\ell - \frac{(\bm \pi \bm\sigma^\ell - \bm \pi \bm\sigma^{\ell-1})}{\Delta t },\boldsymbol{e}_h^\ell)\Big)
\end{align*}
noticing that $\boldsymbol{e}_h^0 = \boldsymbol{0}$. 
We employ now \eqref{eq:continuity_dg} together with  Cauchy–Schwarz and Young inequalities to obtain:
\begin{align*}
  &  \left\| \mu^{-\frac12}\boldsymbol{{\rm dev}} (\boldsymbol{e}_h^k) \right\|^2_{\bm L^2(\Omega)} + \Delta t  \sum_{\ell=1}^{k} |\bm e_h^\ell|_{dG}^2 \\
 &  \lesssim
\Delta t \sum_{\ell=1}^{k} \Big( \frac{1}{\epsilon_1}\||\boldsymbol{e}_I^\ell|\|_{dG}^2  + \epsilon_1 |\boldsymbol{e}_h^\ell|_{dG}^2 +  \frac{1}{\epsilon_2}\| \mu^{-\frac12}\boldsymbol{{\rm dev}} (\dot{\bm e}_I^\ell) \|^2 + \epsilon_2 \|\mu^{-\frac12}\boldsymbol{{\rm dev}} (\boldsymbol{e}_h^\ell)\|^2 \\ & \qquad \qquad + \frac{1}{\epsilon_3}\| \mu^{-\frac12}\boldsymbol{{\rm dev}}(\bm \pi \dot{\bm\sigma}^\ell - \frac{\bm \pi \bm\sigma^\ell - \bm \pi \bm\sigma^{\ell-1}}{\Delta t })\|^2 +\epsilon_3 \|\mu^{-\frac12} \boldsymbol{{\rm dev}}(\boldsymbol{e}_h^\ell) \|^2 \Big)  \\
& \lesssim 
\Delta t \sum_{\ell=1}^{k} \Big( \frac{1}{\epsilon_1}\||\boldsymbol{e}_I^\ell|\|_{dG}^2   +  \frac{1}{\epsilon_2}\| \mu^{-\frac12}\boldsymbol{{\rm dev}} (\dot{\bm e}_I^\ell) \|^2 + \frac{1}{\epsilon_3}\| \mu^{-\frac12}\boldsymbol{{\rm dev}}(\bm \pi \dot{\bm\sigma}^\ell - \frac{\bm \pi \bm\sigma^\ell - \bm \pi \bm\sigma^{\ell-1}}{\Delta t })\|^2
 \\ & \qquad \qquad  + \epsilon_1 |\boldsymbol{e}_h^\ell|_{dG}^2 \Big) + (\epsilon_2+\epsilon_3) k \Delta t \max_{\ell\in \{ 1,...,n\}}\|\mu^{-\frac12} \boldsymbol{{\rm dev}}(\boldsymbol{e}_h^\ell) \|^2  
\end{align*}
Next, by taking $\epsilon_1$ small enough, noticing that $k\Delta t = t_k \leq T$, choosing $\epsilon_2$ and $\epsilon_3$ small enough,  
and taking the maximun for $k \in \{1,...,N_T\}$ we have
\begin{align*}
  & \max_{k=\{1,...,N_T\}}\left\| \mu^{-\frac12}\boldsymbol{{\rm dev}} (\boldsymbol{e}_h^k) \right\|^2_{\bm L^2(\Omega)} + \Delta t  \sum_{\ell=1}^{N_T} |\bm e_h^\ell|_{dG}^2 \\
 &  \lesssim 
\Delta t \sum_{\ell=1}^{N_T} \Big( \||\boldsymbol{e}_I^\ell|\|_{dG}^2   +  \| \mu^{-\frac12}\boldsymbol{{\rm dev}} (\dot{\bm e}_I^\ell) \|^2 + \| \mu^{-\frac12}\boldsymbol{{\rm dev}}(\bm \pi \dot{\bm\sigma}^\ell - \frac{\bm \pi \bm\sigma^\ell - \bm \pi \bm\sigma^{\ell-1}}{\Delta t })\|^2 \Big) \\
& \lesssim 
T \max_{\ell \in \{1,...,N_T\}} \Big( \||\boldsymbol{e}_I^\ell|\|_{dG}^2   +  \| \mu^{-\frac12}\boldsymbol{{\rm dev}} (\dot{\bm e}_I^\ell) \|^2 + \| \mu^{-\frac12}\boldsymbol{{\rm dev}}(\bm \pi \dot{\bm\sigma}^\ell - \frac{\bm \pi \bm\sigma^\ell - \bm \pi \bm\sigma^{\ell-1}}{\Delta t })\|^2 \Big). 
\end{align*}
Finally, we use the result in Lemma~\ref{lemma:dev_bound} and the estimates \eqref{eq:interp_sigma}, \eqref{eq:estimate_sigma}, and we obtain
\begin{align*}
  & \max_{k=\{1,...,N_T\}}\left\| \mu^{-\frac12}\boldsymbol{{\rm dev}} (\boldsymbol{e}_h^k) \right\|^2_{\bm L^2(\Omega)} +   \sum_{\ell=1}^{N_T} \Delta t |\bm e_h^\ell|_{dG}^2 \\
& \lesssim 
T \max_{\ell \in \{1,...,N_T\}} \Big( \sum_{\kappa\in \mathcal{T}_h} \frac{h_k^{2(s_k-1)}}{p_k^{2(m-3/2)}} \left\|\mathcal{E}\boldsymbol{\sigma}^\ell\right\|^2_{\bm H^{m} (\kappa)}
   + \sum_{\kappa\in \mathcal{T}_h} \frac{h_k^{2s_k}}{p_k^{2m}} \left\|\mathcal{E}\dot{\boldsymbol{\sigma}}\right\|^2_{\bm H^{m} (\kappa)} \\ &+  
     \Delta t^2 \left\|\bm\sigma \right\|^2_{C^2((0,T];\bm L^2(\Omega))} \Big).
\end{align*}

By combining the last inequality to \eqref{eq:estimate_pi} we conclude the proof.
\end{proof}

\section{Numerical results}\label{sec:numerical_test}

\noindent In this section, we present the results of the numerical simulations obtained through the discretization in Section~\ref{sec:dG_discretization} implemented in \texttt{lymph} (\texttt{https://bitbucket.org/lymph/lymph/src/StokesPS}), an open source MATLAB library \cite{lymph2024}. 
First, we consider a test case to demonstrate the theoretical error bounds. Next, we investigate the capability of the proposed approach to recover the pressure and the velocity fields of the velocity-pressure Stokes formulation, as explained in Remark~\ref{rem-sub:p_and_u_rec}. Finally, we solve a test case of engineering interest, namely, the flow around a circular cylinder.

\subsection{Verification test case}\label{sec:unsteady}
In the following simulations, we consider the domain $\Omega = (0,1)^2 $, and four different polygonal meshes having different granularity, cf. Figure~\ref{fig:1}. Dirichlet boundary conditions are imposed on the top and right boundaries (blue lines), while Neumann conditions are applied on the remaining part of the boundary (red lines). We also set  $\mu = 1$.
\begin{figure}
    \centering
   \subfloat[]
        {\includegraphics[width=0.24\textwidth]{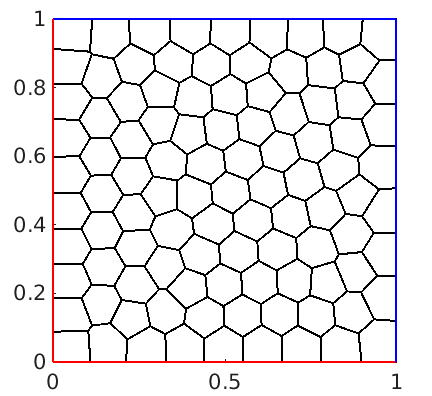}}
    \subfloat[]
        {\includegraphics[width=0.24\textwidth]{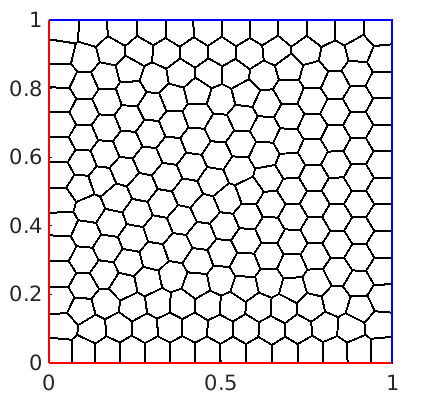}}
    \subfloat[]
        {\includegraphics[width=0.24\textwidth]{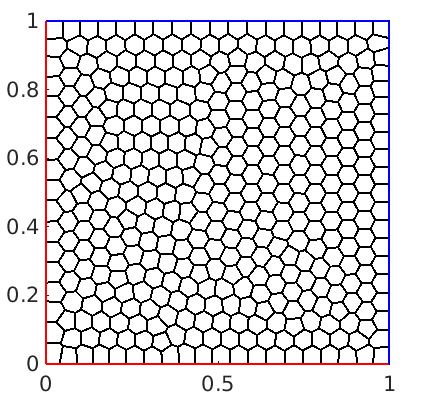}}
    \subfloat[]
        {\includegraphics[width=0.24\textwidth]{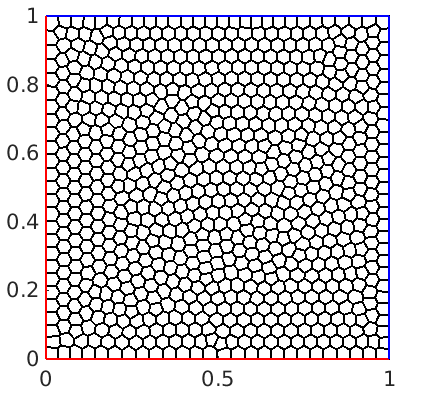}}
    \caption{Test case of Section~\ref{sec:unsteady}. Considered Polytopal meshes: (a) 100 elements ($h \approx 0.1759$), (b) 200 elements ($h \approx 0.1260$), (c) 400 elements ($h \approx 0.0909$), and (d) 800 elements ($h \approx 0.0637$). Boundary edges are highlighted in blue for Dirichlet conditions, and red for Neumann conditions.}
    \label{fig:1}
\end{figure}
We consider the following exact solution of problem \eqref{eq:eqnn}:
$$
\boldsymbol{\sigma}(\bm x,t) = \sin(2t) \begin{bmatrix}\sin(\pi x)\sin(\pi y) & 0 \\ 0 & -\sin(\pi x)\sin(\pi y)\end{bmatrix}.        
$$
\begin{figure}
    \centering
%
%
\begin{tikzpicture}

\begin{axis}[%
width=0.35\textwidth,
height=0.35\textwidth,
scale only axis,
xmode=log,
xmin=0.05,
xmax=0.2,
xminorticks=true,
xlabel style={font=\color{black}},
xlabel={$h$},
ymode=log,
ymin=1.e-8,
ymax=0.2,
ylabel style={font=\color{black}},
ylabel={$\| \bm \sigma - \bm \sigma_h \|_{E}$},
axis background/.style={fill=white},
yminorticks=true,
xmajorgrids,
xminorgrids,
ymajorgrids,
yminorgrids,
legend style={at={(0.7,0.000)}, anchor=south west, legend cell align=left, align=left, draw=white!15!black}
]
\addplot [color=blue, line width=2.0pt]
  table[row sep=crcr]{%
0.17589951497685	0.0644449217769595\\
0.126032381980115	0.0428043856286823\\
0.0909826836096301	0.0283725688147399\\
0.0636997815814856	0.0188496863851049\\
};
\addlegendentry{$p = 1$}

\addplot [color=black, forget plot]
  table[row sep=crcr]{%
0.0636998	0.015\\
0.0909827	0.0214\\
};

\addplot [color=black, forget plot]
  table[row sep=crcr]{%
0.0909827	0.015\\
0.0909827	0.0214\\
};
\addplot [color=black, forget plot]
  table[row sep=crcr]{%
0.0636998	0.015\\
0.0909827	0.015\\
};

\node[right, align=left, text=black, font=\normalsize]
at (axis cs:0.09,0.017) {1};

\addplot [color=blue, line width=2.0pt, mark=o, mark options={solid, blue}]
  table[row sep=crcr]{%
0.17589951497685	0.007959154874184\\
0.126032381980115	0.00394439022644889\\
0.0909826836096301	0.00197989694427489\\
0.0636997815814856	0.000988609931842377\\
};
\addlegendentry{$p = 2$}

\addplot [color=black, forget plot]
  table[row sep=crcr]{%
0.0636998	0.0005\\
0.0909827	0.001\\
};

\addplot [color=black, forget plot]
  table[row sep=crcr]{%
0.0909827	0.0005\\
0.0909827	0.001\\
};
\addplot [color=black, forget plot]
  table[row sep=crcr]{%
0.0636998	0.0005\\
0.0909827	0.0005\\
};

\node[right, align=left, text=black, font=\normalsize]
at (axis cs:0.09,0.00075) {2};

\addplot [color=blue, line width=2.0pt, mark=square, mark options={solid, blue}]
  table[row sep=crcr]{%
0.17589951497685	0.000480495273951811\\
0.126032381980115	0.000164930122260953\\
0.0909826836096301	5.55340441423886e-05\\
0.0636997815814856	1.90360174416475e-05\\
};
\addlegendentry{$p = 3$}

\addplot [color=black, forget plot]
  table[row sep=crcr]{%
0.0636998	1.e-05\\
0.0909827	3.e-05\\
};

\addplot [color=black, forget plot]
  table[row sep=crcr]{%
0.0909827	1.e-05\\
0.0909827	3.e-05\\
};
\addplot [color=black, forget plot]
  table[row sep=crcr]{%
0.0636998	1.e-05\\
0.0909827	1.e-05\\
};

\node[right, align=left, text=black, font=\normalsize]
at (axis cs:0.09,1.75e-5) {3};

\addplot [color=blue, line width=2.0pt, mark=diamond, mark options={solid, blue}]
  table[row sep=crcr]{%
0.17589951497685	1.79535882589395e-05\\
0.126032381980115	4.34334434419561e-06\\
0.0909826836096301	1.0719671133024e-06\\
0.0636997815814856	2.81599264050311e-07\\
};
\addlegendentry{$p = 4$}

\addplot [color=black, forget plot]
  table[row sep=crcr]{%
0.0636998	1.5e-07\\
0.0909827	6.e-07\\
};

\addplot [color=black, forget plot]
  table[row sep=crcr]{%
0.0909827	1.5e-07\\
0.0909827	6.e-07\\
};
\addplot [color=black, forget plot]
  table[row sep=crcr]{%
0.0636998	1.5e-07\\
0.0909827	1.5e-07\\
};

\node[right, align=left, text=black, font=\normalsize]
at (axis cs:0.09,3.e-7) {4};

\end{axis}
\end{tikzpicture}%
%
%
\definecolor{mycolor1}{rgb}{0.00000,0.44700,0.74100}%
\begin{tikzpicture}

\begin{axis}[%
width=0.35\textwidth,
height=0.35\textwidth,
scale only axis,
scale only axis,
xmode=log,
xmin=0.01,
xmax=0.15,
xminorticks=true,
xlabel style={font=\color{black}},
xlabel={$\Delta t$},
ymode=log,
ymin=1.e-8,
ymax=0.2,
yminorticks=true,
axis background/.style={fill=white},
xmajorgrids,
xminorgrids,
ymajorgrids,
yminorgrids,
legend style={at={(0.0,0.000)}, anchor=south west,legend cell align=left, align=left, draw=white!15!black}
]
\addplot [color=blue, line width=2.0pt]
  table[row sep=crcr]{%
0.1	0.111640801079277\\
0.05	0.05595571790468\\
0.025	0.0279824611371833\\
0.0125	0.0139949137206521\\
};
\addlegendentry{Impicit Euler}

\addplot [color=black, forget plot]
  table[row sep=crcr]{%
0.0125	0.01\\
0.025	0.0214\\
};
\addplot [color=black, forget plot]
  table[row sep=crcr]{%
0.025	0.01\\
0.025	0.0214\\
};
\addplot [color=black, forget plot]
  table[row sep=crcr]{%
0.0125	0.01\\
0.025	0.01\\
};

\node[right, align=left, text=black, font=\normalsize]
at (axis cs:0.025,0.017) {1};

\addplot [color=blue, line width=2.0pt, mark=o, mark options={solid, blue}]
  table[row sep=crcr]{%
0.1	0.0021312861109385\\
0.05	0.000532138799575681\\
0.025	0.000133198684066059\\
0.0125	3.43117770869249e-05\\
};
\addlegendentry{Crank-Nicolson}

\addplot [color=black, forget plot]
  table[row sep=crcr]{%
0.0125	2.e-5\\
0.025	8.e-5\\
};
\addplot [color=black, forget plot]
  table[row sep=crcr]{%
0.025	2.e-5\\
0.025	8.e-5\\
};
\addplot [color=black, forget plot]
  table[row sep=crcr]{%
0.0125	2.e-5\\
0.025	2.e-5\\
};

\node[right, align=left, text=black, font=\normalsize]
at (axis cs:0.025,4.e-5) {2};

\end{axis}
\end{tikzpicture}%
 \caption{Test case of Section~\ref{sec:unsteady}. Left: log-log plot of the computed error $\| \bm \sigma -\bm \sigma_h \|_{E}$ as a function of the mesh size $h$ for $p = 1,...,4$, by fixing $T=0.25$ and $\Delta t = 1.e-3$. Right: log-log plot of the computed error $\| \bm \sigma -\bm \sigma_h \|_{E}$ as a function of time step $\Delta t$ fixing the polynomial degree $p = 4$, and $h \approx 0.0909$.}
    \label{fig:testcase2_convergence}
\end{figure}
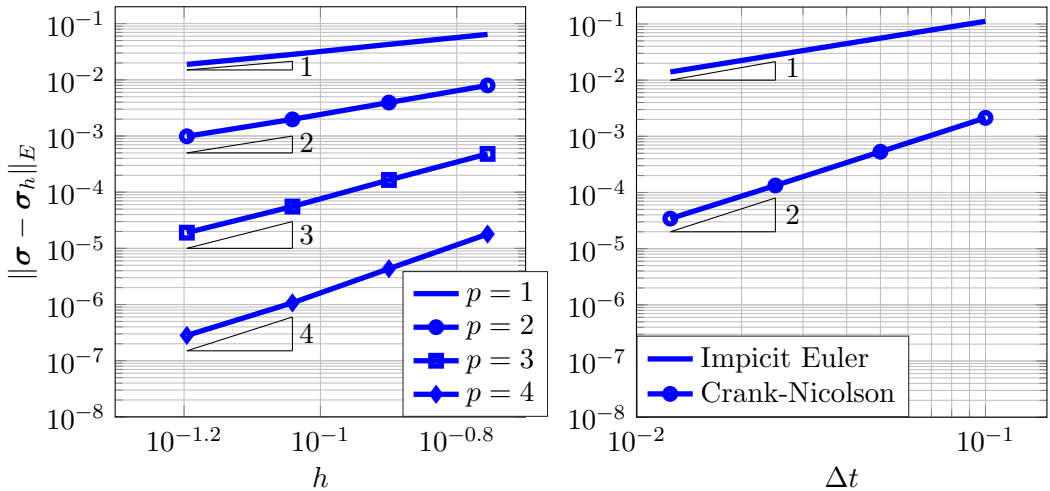

\begin{figure}
\centering
   \subfloat[Test case of Section~\ref{sec:unsteady}.\label{fig:testcase2_convergence_p}]{
%
%
\definecolor{mycolor1}{rgb}{0.00000,0.44700,0.74100}%
\begin{tikzpicture}

\begin{axis}[%
width=0.4\textwidth,
height=0.4\textwidth,
scale only axis,
xmin=1,
xmax=6,
ymode=log,
ymin=1e-08,
ymax=0.1,
yminorticks=true,
xtick distance=1,
enlarge x limits=false,
xlabel style={font=\color{black}},
xlabel={$p$},
ylabel style={font=\color{black}},
ylabel={$\| \bm \sigma - \bm \sigma_h \|_{E}$},
axis background/.style={fill=white},
xmajorgrids,
ymajorgrids,
yminorgrids
]
\addplot  [color=blue, line width=2.0pt]
  table[row sep=crcr]{%
1	0.0644449217769595\\
2	0.00795915487418407\\
3	0.000480495273951775\\
4	1.79535882589067e-05\\
5	6.69507399238289e-07\\
6	9.78511106583065e-08\\
};
\end{axis}

\end{tikzpicture}
\subfloat[Test case of Section~\ref{sec-sub:fac}.\label{fig:domain_flow}]{\vspace{-1cm}
\includegraphics[width = 0.5\textwidth]{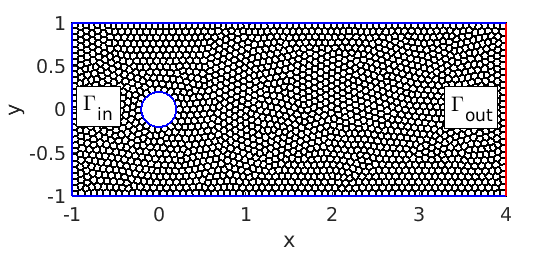}}
\caption{Left: Test case of Section~\ref{sec:unsteady}. Semi-logy plot of the computed error $\| \bm \sigma -\bm \sigma_h \|_{E}$ as a function of the polynomial degree $p = 1,...,6$, by fixing $T=0.25$ and $\Delta t = 1.e-3$ and $100$ mesh elements. Right: Test case of Section~\ref{sec-sub:fac}. Polygonal mesh with $2000$ elements of the rectangular domain $\Omega$ with a circular hole. Dirichlet boundary is highlighted in blue (up, left, bottom), while the Neumann boundary is in red (right).}
\end{figure}
\noindent $\boldsymbol{F}(\bm x,t)$ and the boundary data are computed accordingly.
In Figure~\ref{fig:testcase2_convergence} we report the computed error $\| \bm \sigma -\bm \sigma_h \|_{E}$ as a function of the discretization parameters. In particular, on the left, we show the log-log plot of the error as a function of the mesh size $h$ for different values of the polynomial degree $p=1,...,4$, while on the right, the same quantity is computed versus the time step $\Delta t$ with $\theta=1$ (Implicit Euler) and $\theta = \frac12$ (Crank-Nicolson) in \eqref{eq:tetametodo}, by fixing the mesh elements equal to $400$ and $p=4$. It is possible to observe that the results confirm the theoretical bound in Theorem \ref{thm:convergence}. Finally, in Figure~\ref{fig:testcase2_convergence_p} we plot the computed error as a function of the polynomial degree $p$. We can observe that the method attains exponential convergence, even this is not covered by our theoretical analysis.

\subsection{Pressure and velocity data recovery}\label{sec:recovery}
\begin{figure}
    \centering
\subfloat[]{\includegraphics[width=0.8\linewidth]{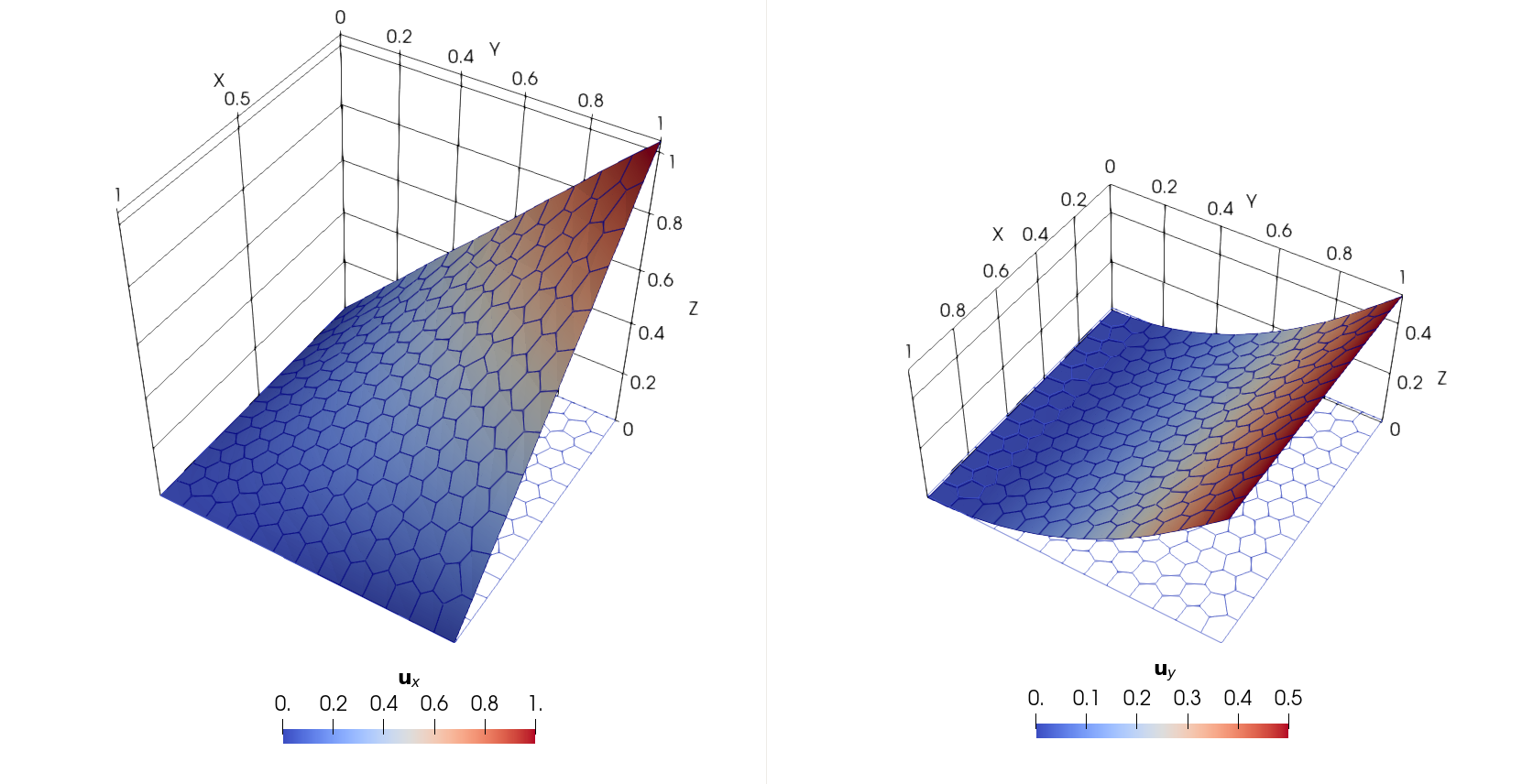}}\\
\subfloat[]{\includegraphics[width=0.35\linewidth]{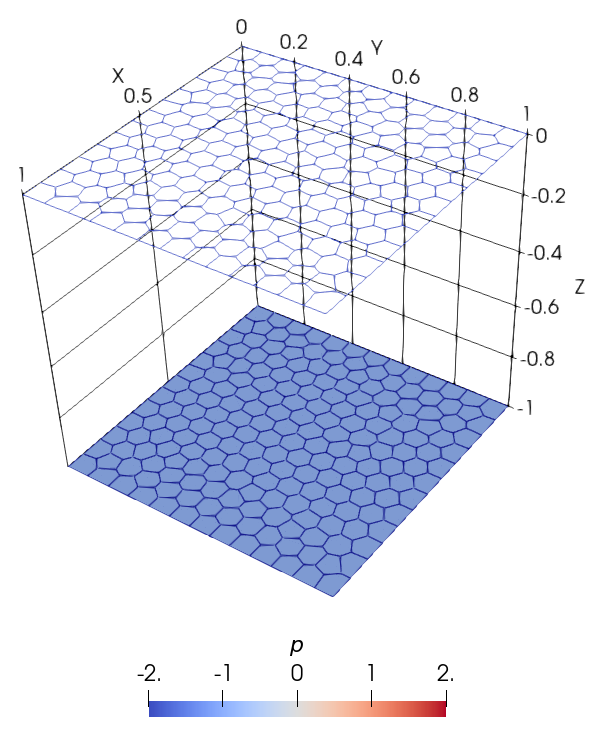}}
    \caption{Test case of Section~\ref{sec:recovery}. Computed velocity field ($u_{x}$ and $u_{y}$) and computed pressure field ($p$) at final time $T = 1$.}
    \label{fig:vel_pre_rec}
\end{figure}
This test case aims to check the capability of the proposed approach to recover the pressure $p$ and the velocity field $\bm u$ of the velocity-pressure Stokes formulation, as explained in Remark~\ref{rem-sub:p_and_u_rec}. We consider $\Omega = (0,1)^2$ and set Dirichlet boundary conditions on all the boundary, except for the right edge where we impose Neumann conditions. 
Then, we consider 
\begin{align*}
\boldsymbol{u}(x,y) = t^2 
\begin{bmatrix}(1-x)y \\ \frac{1}{2}y^2\end{bmatrix},
&&p(x,y) = -\mu t^2,
\end{align*}
as solution of \eqref{eq:stokes_primal}, that lead to the following stress tensor 
$$\boldsymbol{\sigma}(x,y) = \mu t^2 \begin{bmatrix}1-y & 1-x \\ 0 & 1+y\end{bmatrix},   
$$
as solution of \eqref{eq:stokes_dual}. We consider $\mu = 1$, a mesh with $200$ polygonal elements, the polynomial degree $p=3$, the final time $T=1$, and the time integration step $\Delta t = 1.e-2$ with $\theta=\frac{1}{2}$ in \eqref{eq:tetametodo}. To recover the velocity field $\bm u$ a composite trapezoidal quadrature rule has been applied. As one can see from Figure~\ref{fig:vel_pre_rec}, the reconstructed solution at time $T$ perfectly matches with the analytical one.

\subsection{Flow around a cylinder}\label{sec-sub:fac}
In this last example, we consider problem \eqref{eq:eqnn} in 
a rectangular domain having a circular hole as depicted in Figure~\ref{fig:domain_flow}. In particular, we set $\Omega = (-1,4)\times(-1,1) \setminus B_{0.2}(0,0)$, being $B_r(x_c,y_c)$ the circle of center $(x_c,y_c)$ and radius $r$, cf. Figure~\ref{fig:domain_flow}. 
We set  $\bm \sigma \bm n = \bm 0$  on $\Gamma_{\text{out}} = \{4\}\times (-1,1)$, $\bm \nabla \cdot \bm \sigma = ((1-y^2),0)^\top$ on $\Gamma_{\text{in}} = \{-1\}\times(-1,1)$, and use  $\bm \nabla \cdot \bm \sigma = \bm 0$ in the remaining part of the boundary. The medium is supposed to be at rest at the initial time, i.e., $\bm \sigma_0 = \bm 0$, and we also  consider $\boldsymbol{f} = \boldsymbol{0}$.
%
%
\begin{figure}
    \centering
    \includegraphics[width = 1\textwidth]{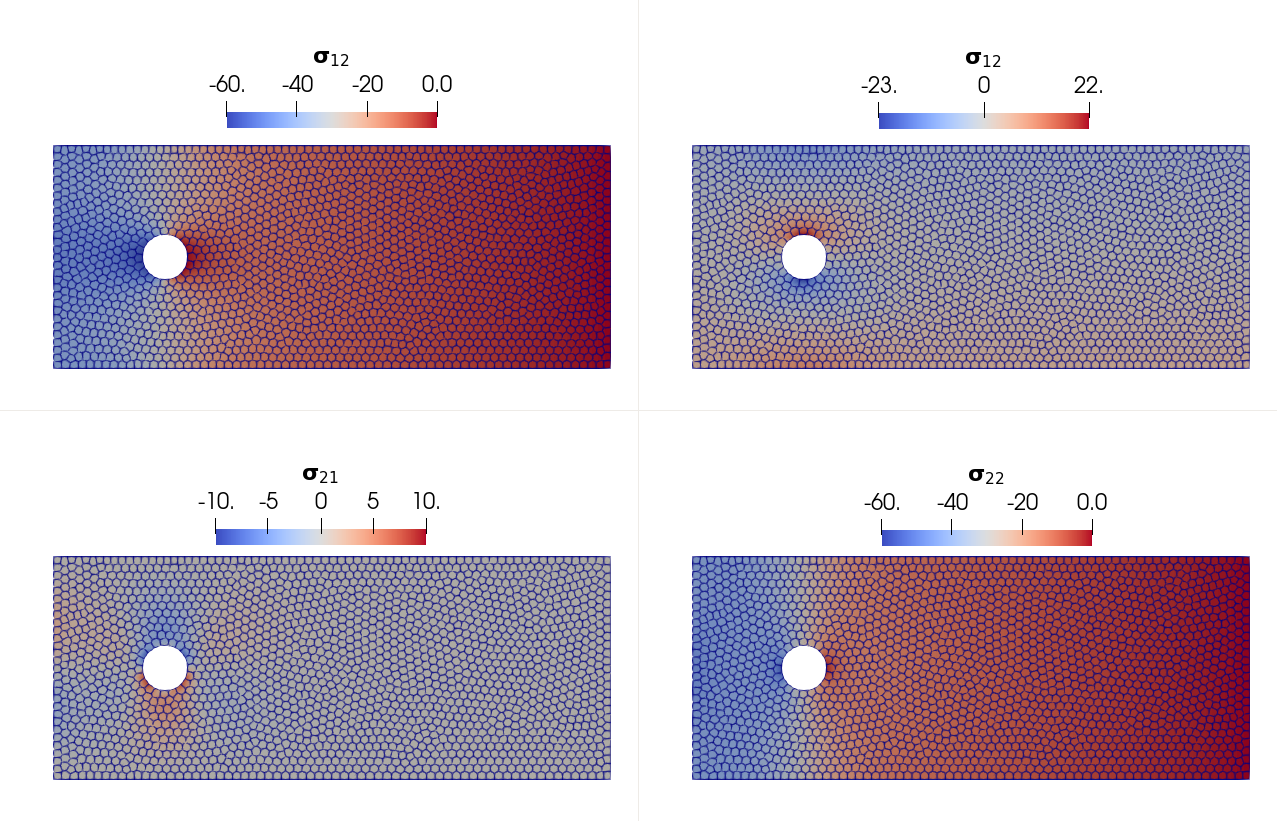}
    \caption{Test case of Section~\ref{sec-sub:fac}. Computed stress tensor field $\bm \sigma_h$ at final time $T=1$.}
    \label{fig:sigma_FAC}
\end{figure}
For the numerical simulation, we set $\mu=2$ in \eqref{eq:eqnn}, employ a mesh made by $2000$ elements, fix the polynomial degree $p_\kappa=3$ for any $\kappa \in \mathcal{T}_h$, choose the final time $T=1$ and the time step $\Delta t = 1.e-2$ for the Crank-Nicolson scheme in \eqref{eq:tetametodo}. 
In Figure~\ref{fig:sigma_FAC} we report the computed stress tensor $\bm \sigma_h$ at the final time $T=1$, while in Figure~\ref{fig:pressureFAC}, resp. Figure~\ref{fig:vely_FAC}, we plot the pressure, resp. velocity field, computed according to Remark~\ref{rem-sub:p_and_u_rec}. As in the previous example, to recover the velocity field $\bm u$ a composite trapezoidal quadrature rule has been applied. 
We compare our results to those obtained with the FEniCS \texttt{https://fenicsproject.org/} software, by solving the problem 
with a dG method on triangular meshes made by $7581$ elements and fixing the polynomial degree equal to $2$ for both velocity and pressure variables.
It is possible to see that the proposed PolydG method is able to reproduce correctly the physics of the system, at a much lower computational cost. 
\begin{figure}
    \centering
    \includegraphics[width = 1\textwidth]{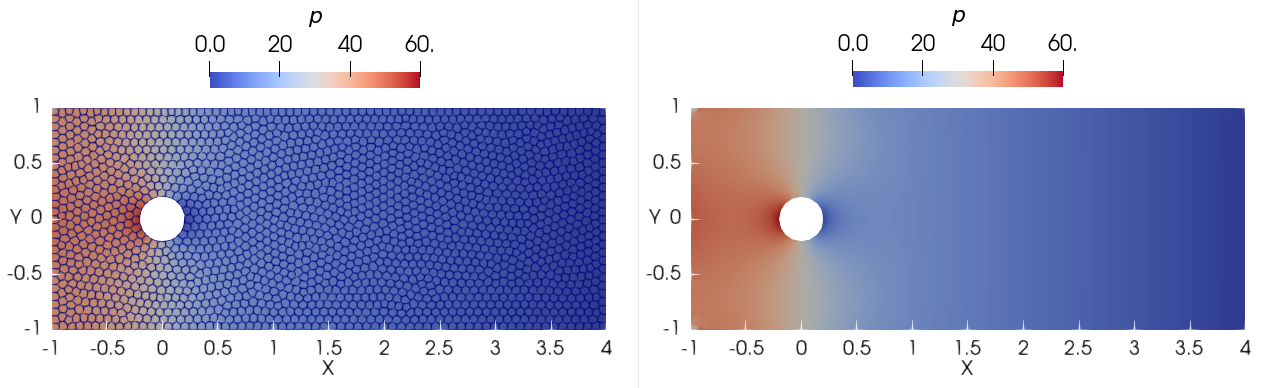}
    \caption{Left: computed pressure field $p_h$ at final time $T=1$ by means of the relation $p_h= -\frac12 tr(\bm \sigma_h)$. Right: computed pressure field $p_h$ obtained with FEniCS.}
    \label{fig:pressureFAC}
\end{figure}
\begin{figure}
    \centering
    \includegraphics[width = 1\textwidth]{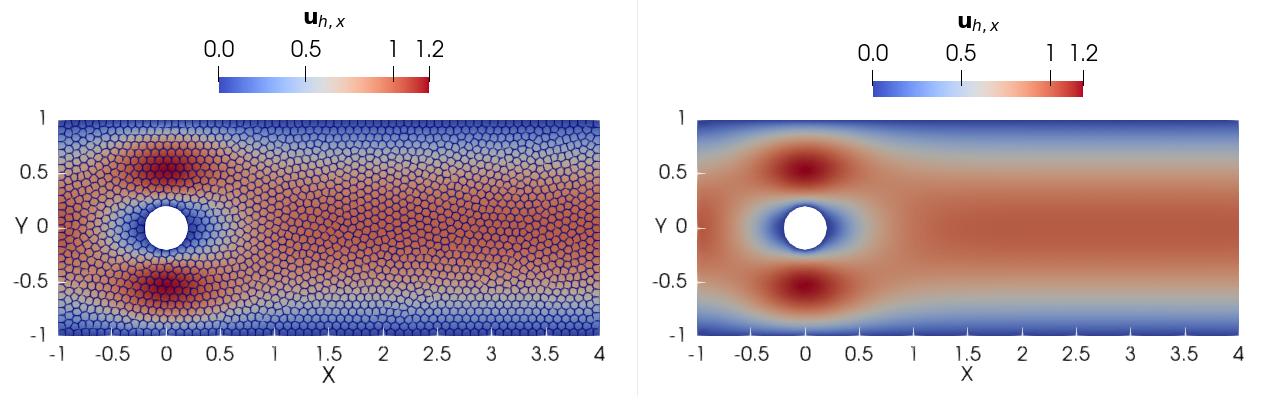}
    \includegraphics[width = 1\textwidth]{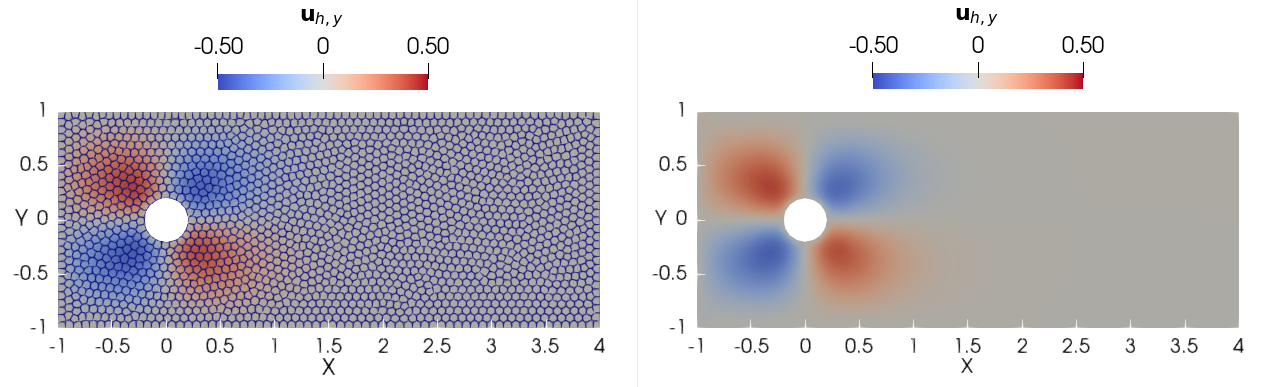}
    \caption{Left: computed velocity field $\bm u_{h}$ at the final time $T=1$ by means of the relation $\bm u_h =\bm u_{0,h} + \int_0^T \boldsymbol{\nabla}\cdot\boldsymbol{\sigma}_h(s)\, ds $. Right: computed velocity field $\bm u_{h}$  obtained with FEniCS.}
    \label{fig:vely_FAC}
\end{figure}

\section{Conclusions}\label{sec:conclusions}
\noindent In this work we have presented a theoretical and numerical analysis for a PolydG approximation of the unsteady Stokes problem written in its pseudo-stress formulation. 
We have proved stability results for both the continuous problem as well as for the semi- and fully-discrete schemes. We established an error estimate for the fully-discrete problem, where the PolydG scheme is combined with the $\theta$-method time integration. Numerical tests demonstrated the theoretical estimates, as well as shown that the method can be effectively employed to solve problems of physical interest.
Possible further developments may include the analysis of the coupling between the Stokes and the Biot system, through appropriate interface conditions and the generalization to non-Newtonian fluid flow. 

\appendix 

\section{Proofs of the theoretical Lemmas}\label{sec:appendix}

\textbf{Proof of Lemma~\ref{lemma:dev_div}} 
Let $\boldsymbol{\sigma} \in \bm H_{0,\Gamma_N}({\rm div}, \Omega)$ assuming $|\Gamma_N|>0$. 
\begin{enumerate}
    \item 
We start by establishing \eqref{eq:devdiv}.
There exists $\boldsymbol{v} \in \bm H^1_{0,\Gamma_D}(\Omega)$
such that 
$\boldsymbol{\nabla}\cdot\boldsymbol{v} = \textrm{tr}(\boldsymbol{\sigma})$ and $\left\|\boldsymbol{v}\right\|_{\bm H^{1}(\Omega)} \lesssim  \left\|\textrm{tr}(\boldsymbol{\sigma})\right\|_{L^{2}(\Omega)}$ (cf. \cite{Bertoluzza.ea:17, Botti.Mascotto:24}). Then, it holds
\begin{align*}
\left\|\textrm{tr}(\boldsymbol{\sigma})\right\|_{ L^{2}(\Omega)}^2 & = ( \textrm{tr}(\boldsymbol{\sigma}), \boldsymbol{\nabla}\cdot\boldsymbol{v})_\Omega = 
 (\boldsymbol{\sigma}, \textrm{tr}(\boldsymbol{\nabla}\boldsymbol{v}) \mathbb{I}_d )_\Omega \\ & = 
( \boldsymbol{\sigma} ,  d\ \boldsymbol{\nabla}\boldsymbol{v} - d\ {\rm \textbf{dev}}(\boldsymbol{\nabla}\boldsymbol{v})  )_\Omega \\
& = - d ( {\rm \textbf{dev}}(\boldsymbol{\sigma}) , \boldsymbol{\nabla}\boldsymbol{v})_\Omega - 
d ( 
\boldsymbol{\nabla}\cdot\boldsymbol{\sigma}, \boldsymbol{v} )_\Omega + 
d  (\boldsymbol{\sigma}\boldsymbol{n},\boldsymbol{v} )_{\partial\Omega} \\
& = - d ( {\rm \textbf{dev}}(\boldsymbol{\sigma}) , \boldsymbol{\nabla}\boldsymbol{v})_\Omega - 
d ( 
\boldsymbol{\nabla}\cdot\boldsymbol{\sigma}, \boldsymbol{v} )_\Omega,
\end{align*}
where we first employed the definition of the trace of a tensor, and subsequently the relationship between the trace and the deviatoric operator, along with integration by parts. We also exploited the fact that $\boldsymbol{\sigma}\in \bm H_{0,\Gamma_N}({\rm div}, \Omega)$ and, as a result, $\langle \boldsymbol{\sigma}\bm{n}, \bm{v}\rangle_{\partial\Omega}=0$ since $\bm{v}$ vanishes on $\Gamma_D$. 
Then, using the Cauchy-Schwarz inequality, we have
\begin{align*} \left\|\textrm{tr}(\boldsymbol{\sigma})\right\|_{ L^{2}(\Omega)}^2 & \lesssim    \left\|{\rm \textbf{dev}}(\boldsymbol{\sigma})\right\|_{\bm L^{2}(\Omega)}  \left\|\boldsymbol{\nabla}\boldsymbol{v}\right\|_{\bm L^{2}(\Omega)} + \left\| 
\boldsymbol{\nabla}\cdot\boldsymbol{\sigma}\right\|_{\bm L^{2}(\Omega)} \left\|\boldsymbol{v}\right\|_{\bm L^{2}(\Omega)}  \\
& \lesssim \Big(  \left\|{\rm \textbf{dev}}(\boldsymbol{\sigma})\right\|_{\bm L^{2}(\Omega)}  + \left\| 
\boldsymbol{\nabla}\cdot\boldsymbol{\sigma}\right\|_{\bm L^{2}(\Omega)} \Big) \left\|\boldsymbol{v}\right\|_{\bm H^{1}(\Omega)} \\
& \lesssim  \Big(  \left\|{\rm \textbf{dev}}(\boldsymbol{\sigma})\right\|_{\bm L^{2}(\Omega)}  + \left\| 
\boldsymbol{\nabla}\cdot\boldsymbol{\sigma}\right\|_{\bm L^{2}(\Omega)} \Big) \left\|\textrm{tr}(\boldsymbol{\sigma})\right\|_{ L^{2}(\Omega)},
\end{align*}
from which we get 
the conclusion, i.e., 
\begin{align*}
\left\|\boldsymbol{\sigma}\right\|_{\bm L^{2}(\Omega)} & \le \left\|{\rm \textbf{dev}}(\boldsymbol{\sigma})\right\|_{\bm L^{2}(\Omega)} + \left\|\textrm{tr}(\boldsymbol{\sigma})\right\|_{L^{2}(\Omega)} \\
& \lesssim   \left\|{\rm \textbf{dev}}(\boldsymbol{\sigma})\right\|_{\bm L^{2}(\Omega)}  + \left\| 
\boldsymbol{\nabla}\cdot\boldsymbol{\sigma}\right\|_{\bm L^{2}(\Omega)}. 
\end{align*}

\item
We now move to the proof of the trace inequality \eqref{eq:traceHdiv}. First, we recall that the trace operator $\gamma: \bm H^1(\Omega) \rightarrow \bm H^{\frac12}(\partial\Omega)$ is onto and has a continuous lifting. Therefore, by definition of the dual norm, we have
$$
\left\|\boldsymbol{\sigma}\boldsymbol{n}\right\|_{\bm H^{-1/2}(\Gamma_D)} 
= \left\|\boldsymbol{\sigma}\boldsymbol{n}\right\|_{\bm H^{-1/2}(\partial\Omega)} $$
$$
=
\sup_{\bm v   \in  \boldsymbol{H}^{1/2}(\partial\Omega)} 
\frac{\langle \boldsymbol{\sigma}\boldsymbol{n}, \bm v  \rangle_{\partial\Omega}}{\left\|\bm v\right\|_{\frac12,\partial\Omega}}
\lesssim \sup_{\bm w \in \boldsymbol{H}^1(\Omega)} 
\frac{\langle \boldsymbol{\sigma}\boldsymbol{n}, \gamma(\bm w)  \rangle_{\partial\Omega}}{\left\|\bm w\right\|_{1,\Omega}}.
$$
Applying the Green formula of \cite[Lemma 2.1.1]{Boffi2013} followed by the Cauchy--Schwarz inequality we obtain 
$$
\begin{aligned}  
\left\|\boldsymbol{\sigma}\boldsymbol{n}\right\|_{\bm H^{-1/2}(\Gamma_D)}
&\lesssim
\sup_{\bm w \in \boldsymbol{H}^1(\Omega)} 
\frac{\int_\Omega \bm\nabla\cdot\bm\sigma \cdot \bm w + 
\int_\Omega \bm\sigma : \bm\nabla\bm w
}{\left\|\bm w\right\|_{1,\Omega}}\\
&\lesssim 
\left\|\boldsymbol{\sigma}\right\|_{\bm L^{2}(\Omega)}  + \left\| 
\boldsymbol{\nabla}\cdot\boldsymbol{\sigma}\right\|_{\bm L^{2}(\Omega)}.
\end{aligned}
$$
The thesis follows by bounding the $L^2$-norm of $\boldsymbol{\sigma}$ with the ${\rm dev}$-${\rm div}$ inequality.
\end{enumerate}

\begin{lemma}\label{lemma:cont_coerc_A} 
Coercivity and continuity of $\mathcal{A}(\cdot, \cdot)$.
\begin{align}
    \mathcal{A}(\boldsymbol{\sigma}_h, \boldsymbol{\sigma}_h) & \gtrsim |\boldsymbol{\sigma}_h|_{dG}^2  & \forall \boldsymbol{\sigma}_h \in \boldsymbol{V}_h, \label{eq:coercivity_dg}\\
    \mathcal{A}(\boldsymbol{\sigma}, \boldsymbol{\tau}_h) & \lesssim \trinorm{\boldsymbol{\sigma}}_{dG} |\boldsymbol{\tau}_h|_{dG}  & 
    \forall \boldsymbol{\sigma} \in \boldsymbol{H}^2(\mathcal{T}_h),  \forall \boldsymbol{\tau}_h \in \boldsymbol{V}_h.     \label{eq:continuity_dg}
\end{align} \\
The coercivity bound holds provided the stability parameter $\alpha$ appearing
in \eqref{def:penalty} is chosen sufficiently large.

\end{lemma}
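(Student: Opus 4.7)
The plan is to follow the standard SIPG-type argument, adapted to the polytopal setting by invoking the discrete inverse trace inequality available on polytopic-regular meshes (Assumption~\ref{ass::regular}): for any $v \in \mathbb{P}_{p_\kappa}(\kappa)$ and any face $F\subset\partial\kappa$, one has $\|v\|^2_{L^2(F)} \lesssim p_\kappa^2 h_\kappa^{-1}\|v\|^2_{L^2(\kappa)}$. In view of the definition \eqref{def:penalty} of $\gamma_e$, this yields the key estimate
\begin{equation*}
\bigl\|\gamma_e^{-1/2}\{\!\{\boldsymbol{\nabla}_h\cdot\boldsymbol{\tau}_h\}\!\}\bigr\|^2_{\mathcal{F}_h^{I,N}}
\;\leq\; \frac{C_{\text{inv}}}{\alpha}\,\|\boldsymbol{\nabla}_h\cdot\boldsymbol{\tau}_h\|^2_{\Omega}
\quad \forall \boldsymbol{\tau}_h\in \boldsymbol{V}_h,
\end{equation*}
with $C_{\text{inv}}>0$ independent of mesh size and polynomial degree. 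This bound is the main technical ingredient; once it is in place, both coercivity and continuity reduce to elementary manipulations.

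For coercivity, I would take $\boldsymbol{\tau} = \boldsymbol{\sigma}_h$ in the definition of $\mathcal{A}(\cdot,\cdot)$, collapse the two consistency terms into a single factor of $2$, and then estimate the resulting cross term via weighted Cauchy--Schwarz on each face, namely
\begin{equation*}
2\bigl|\langle \{\!\{\boldsymbol{\nabla}_h\cdot\boldsymbol{\sigma}_h\}\!\},[[\boldsymbol{\sigma}_h\boldsymbol{n}]]\rangle_{\mathcal{F}_h^{I,N}}\bigr|
\;\leq\; 2\bigl\|\gamma_e^{-1/2}\{\!\{\boldsymbol{\nabla}_h\cdot\boldsymbol{\sigma}_h\}\!\}\bigr\|_{\mathcal{F}_h^{I,N}} \bigl\|\gamma_e^{1/2}[[\boldsymbol{\sigma}_h\boldsymbol{n}]]\bigr\|_{\mathcal{F}_h^{I,N}}.
\end{equation*}
Applying the inverse trace estimate above to the first factor and then Young's inequality with a parameter tuned to $\alpha$, the cross term is controlled by $\tfrac{C_{\text{inv}}}{\alpha\,\delta}\|\boldsymbol{\nabla}_h\cdot\boldsymbol{\sigma}_h\|^2_{\Omega} + \delta\|\gamma_e^{1/2}[[\boldsymbol{\sigma}_h\boldsymbol{n}]]\|^2_{\mathcal{F}_h^{I,N}}$. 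Choosing $\delta<1$ and then $\alpha$ large enough so that $C_{\text{inv}}/(\alpha\delta)<1$ lets me absorb both contributions into the good terms $\|\boldsymbol{\nabla}_h\cdot\boldsymbol{\sigma}_h\|^2_{\Omega}$ and $\|\gamma_e^{1/2}[[\boldsymbol{\sigma}_h\boldsymbol{n}]]\|^2_{\mathcal{F}_h^{I,N}}$, yielding $\mathcal{A}(\boldsymbol{\sigma}_h,\boldsymbol{\sigma}_h)\gtrsim |\boldsymbol{\sigma}_h|_{dG}^2$.

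For continuity, I would apply the Cauchy--Schwarz inequality termwise. The volume term is bounded directly by $\|\boldsymbol{\nabla}_h\cdot\boldsymbol{\sigma}\|_\Omega \|\boldsymbol{\nabla}_h\cdot\boldsymbol{\tau}_h\|_\Omega \leq \trinorm{\boldsymbol{\sigma}}_{dG}|\boldsymbol{\tau}_h|_{dG}$, and the penalty term similarly by $|\boldsymbol{\sigma}|_{dG}|\boldsymbol{\tau}_h|_{dG}$. For the consistency term $\langle\{\!\{\boldsymbol{\nabla}_h\cdot\boldsymbol{\sigma}\}\!\},[[\boldsymbol{\tau}_h\boldsymbol{n}]]\rangle$ the weighted Cauchy--Schwarz gives exactly $\|\gamma_e^{-1/2}\{\!\{\boldsymbol{\nabla}_h\cdot\boldsymbol{\sigma}\}\!\}\|_{\mathcal{F}_h}\,\|\gamma_e^{1/2}[[\boldsymbol{\tau}_h\boldsymbol{n}]]\|_{\mathcal{F}_h^{I,N}}$, which fits the triple-norm on the left and the single-bar on the right. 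The symmetric consistency term $\langle\{\!\{\boldsymbol{\nabla}_h\cdot\boldsymbol{\tau}_h\}\!\},[[\boldsymbol{\sigma}\boldsymbol{n}]]\rangle$ is the one where the discrete inverse trace inequality is again used: it bounds $\|\gamma_e^{-1/2}\{\!\{\boldsymbol{\nabla}_h\cdot\boldsymbol{\tau}_h\}\!\}\|_{\mathcal{F}_h^{I,N}}$ by $\|\boldsymbol{\nabla}_h\cdot\boldsymbol{\tau}_h\|_\Omega$ and hence by $|\boldsymbol{\tau}_h|_{dG}$, while $\|\gamma_e^{1/2}[[\boldsymbol{\sigma}\boldsymbol{n}]]\|_{\mathcal{F}_h^{I,N}}\leq \trinorm{\boldsymbol{\sigma}}_{dG}$. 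Summing the four contributions concludes the proof.

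The only genuinely delicate point is the quantitative dependence of the inverse trace constant on the polytopic regularity constants of Definition~\ref{def::polytopic_regular} and on the local degree $p_\kappa$: this is what forces the stabilization to scale like $p_\kappa^2/h_\kappa$ in \eqref{def:penalty} and what dictates how large $\alpha$ must be chosen. Verifying that the hidden constants can be made independent of $h$ and $p$, uniformly over the mesh sequence (using Assumptions~\ref{ass::regular}--\ref{ass::3} to transfer bounds across neighboring elements when handling averages on internal faces), is the main bookkeeping step; it follows the blueprint of \cite{CangianiDongGeorgoulisHouston_2017}.
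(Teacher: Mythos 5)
Your proposal is correct: the weighted Cauchy--Schwarz plus the polytopic inverse trace inequality (applied face-by-face via the non-overlapping simplices $S_\kappa^F$ of Definition~\ref{def::polytopic_regular}, which is what makes the bound insensitive to the number of faces per element) followed by Young's inequality with $\alpha$ large is exactly the standard argument. The paper itself does not reproduce a proof but simply cites Lemma~A.3 of \cite{Antonietti2021}, whose proof follows the same blueprint, so your write-up fills in precisely what the reference contains.
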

\begin{proof}
 See \cite[Lemma A.3]{Antonietti2021}.   
\end{proof}

\begin{lemma}\label{lemma:sigma_l2}
Let $\boldsymbol{\sigma}_h \in \bm V_h$. Then, it holds
\begin{equation*}
 \left\|\boldsymbol{\sigma}_h\right\|^2_{\bm L^2(\Omega)} \lesssim \left\|\mu^{-\frac12}\boldsymbol{{\rm dev}} (\boldsymbol{\sigma}_h)\right\|^2_{\bm L^2(\Omega)} +|\boldsymbol{\sigma}_h|_{dG}^2.
    \end{equation*}
\end{lemma}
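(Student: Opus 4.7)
The plan is to mimic the continuous dev--div inequality of Lemma~\ref{lemma:dev_div}, keeping track of the inter-element jumps that appear once $\boldsymbol{\sigma}_h$ is only piecewise smooth. As in the continuous case, I start from the orthogonal split $\|\boldsymbol{\sigma}_h\|_{\bm L^2(\Omega)}^2 \le \|\boldsymbol{{\rm dev}}(\boldsymbol{\sigma}_h)\|_{\bm L^2(\Omega)}^2 + \|\mathrm{tr}(\boldsymbol{\sigma}_h)\|_{L^2(\Omega)}^2$, so the whole argument reduces to bounding $\|\mathrm{tr}(\boldsymbol{\sigma}_h)\|_{L^2(\Omega)}$ by the right-hand side.

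For that, I pick $\bm v\in\bm H^1_{0,\Gamma_D}(\Omega)$ with $\nabla\!\cdot\!\bm v=\mathrm{tr}(\boldsymbol{\sigma}_h)$ and $\|\bm v\|_{\bm H^1(\Omega)}\lesssim\|\mathrm{tr}(\boldsymbol{\sigma}_h)\|_{L^2(\Omega)}$ (cf.\ the reference used in the proof of Lemma~\ref{lemma:dev_div}). Using $\mathrm{tr}(\boldsymbol{\tau})\mathbb I_d = d\,\boldsymbol{\tau}-d\,\boldsymbol{{\rm dev}}(\boldsymbol{\tau})$ I rewrite
$$
\|\mathrm{tr}(\boldsymbol{\sigma}_h)\|_{L^2(\Omega)}^2
= \bigl(\mathrm{tr}(\boldsymbol{\sigma}_h),\nabla\!\cdot\!\bm v\bigr)_\Omega
= d\,(\boldsymbol{\sigma}_h,\nabla\bm v)_{\mathcal T_h}-d\,(\boldsymbol{{\rm dev}}(\boldsymbol{\sigma}_h),\nabla\bm v)_{\mathcal T_h}.
$$
I then integrate the first term by parts element by element. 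Because $\bm v$ is single-valued across interfaces and vanishes on $\Gamma_D$, the boundary contributions collapse into jump integrals over $\mathcal F_h^I\cup\mathcal F_h^N$ only, yielding
$$
(\boldsymbol{\sigma}_h,\nabla\bm v)_{\mathcal T_h}
= -(\nabla_h\!\cdot\!\boldsymbol{\sigma}_h,\bm v)_{\mathcal T_h}
+ \langle [[\boldsymbol{\sigma}_h\bm n]],\{\!\{\bm v\}\!\}\rangle_{\mathcal F_h^{I,N}}.
$$

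The key technical step is controlling the face term by $|\boldsymbol{\sigma}_h|_{dG}$. I weight by $\gamma_e^{1/2}$ and apply Cauchy--Schwarz,
$$
\bigl|\langle [[\boldsymbol{\sigma}_h\bm n]],\{\!\{\bm v\}\!\}\rangle_{\mathcal F_h^{I,N}}\bigr|
\le \|\gamma_e^{1/2}[[\boldsymbol{\sigma}_h\bm n]]\|_{\mathcal F_h^{I,N}}\,\|\gamma_e^{-1/2}\bm v\|_{\mathcal F_h^{I,N}},
$$
and then invoke the standard (continuous) trace inequality $\|\bm v\|_{L^2(\partial\kappa)}^2\lesssim h_\kappa^{-1}\|\bm v\|_{\bm L^2(\kappa)}^2+h_\kappa\|\nabla\bm v\|_{\bm L^2(\kappa)}^2$, together with the definition of $\gamma_e\sim p_\kappa^2/h_\kappa$ in \eqref{def:penalty} and Assumption~\ref{ass::3}, to obtain $\|\gamma_e^{-1/2}\bm v\|_{\mathcal F_h^{I,N}}\lesssim\|\bm v\|_{\bm H^1(\Omega)}$. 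Combining the three pieces and using Cauchy--Schwarz on the volume terms gives
$$
\|\mathrm{tr}(\boldsymbol{\sigma}_h)\|_{L^2(\Omega)}^2
\lesssim \bigl(\|\boldsymbol{{\rm dev}}(\boldsymbol{\sigma}_h)\|_{\bm L^2(\Omega)}+|\boldsymbol{\sigma}_h|_{dG}\bigr)\|\bm v\|_{\bm H^1(\Omega)}
\lesssim \bigl(\|\boldsymbol{{\rm dev}}(\boldsymbol{\sigma}_h)\|_{\bm L^2(\Omega)}+|\boldsymbol{\sigma}_h|_{dG}\bigr)\|\mathrm{tr}(\boldsymbol{\sigma}_h)\|_{L^2(\Omega)}.
$$
Dividing by $\|\mathrm{tr}(\boldsymbol{\sigma}_h)\|_{L^2(\Omega)}$ and squaring, then reinserting into the opening split and absorbing the factor $\mu$ (bounded in the hidden constant) to recover the $\mu^{-1/2}$ weight on the deviatoric norm, delivers the claimed estimate.

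The main obstacle I anticipate is precisely the face-term control: one must pick the right weighting by $\gamma_e$ so that what appears on $\bm v$ can be absorbed by $\|\bm v\|_{\bm H^1(\Omega)}$ via the polytopic trace inequality; without the $\gamma_e$ calibration of \eqref{def:penalty} and Assumption~\ref{ass::3} the summation over faces of polytopal elements would not close. Everything else is an algebraic manipulation analogous to the continuous dev--div proof in Lemma~\ref{lemma:dev_div}.
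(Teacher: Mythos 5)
Your proposal is correct and follows essentially the same route as the paper's proof: the same lifting $\bm v\in\bm H^1_{0,\Gamma_D}(\Omega)$ with $\boldsymbol{\nabla}\cdot\bm v=\mathrm{tr}(\boldsymbol{\sigma}_h)$, the same element-wise integration by parts producing the jump term on $\mathcal F_h^{I,N}$, the same $\gamma_e^{\pm 1/2}$-weighted Cauchy--Schwarz, and the same continuous local trace inequality to absorb $\|\gamma_e^{-1/2}\bm v\|_{\mathcal F_h^{I,N}}$ into $\|\bm v\|_{\bm H^1(\Omega)}$. No gaps.
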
 

\begin{proof}
Similarly to the proof of Lemma~\ref{lemma:dev_div}, we start by observing that there is $\boldsymbol{v} \in \bm H^1_{0,\Gamma_D}(\Omega)$ such that 
$\boldsymbol{\nabla}\cdot\boldsymbol{v} = \rm{tr}(\boldsymbol{\sigma}_h)$ and $\left\|\boldsymbol{v}\right\|_{\bm H^{1}(\Omega)} \lesssim \left\|\rm{tr}(\boldsymbol{\sigma}_h)\right\|_{L^{2}(\Omega)}.$
Then, we infer
\begin{align}\label{eq:trace_l2}
\left\|{\rm{tr}}(\boldsymbol{\sigma}_h)\right\|_{ L^2(\Omega)}^2 
=  d ( \boldsymbol{\sigma}_h, \boldsymbol{\nabla}\boldsymbol{v} )_{\mathcal{T}_h} - 
d(  \boldsymbol{{\rm dev}}(\boldsymbol{\sigma}_h), \boldsymbol{\nabla}\boldsymbol{v})_{\mathcal{T}_h} .
\end{align}
We focus on the first term on the right-hand side, we integrate it by parts, recall that $\boldsymbol{v} \in \bm H^1_{0,\Gamma_D}(\Omega)$ implies $[[\boldsymbol{v}]]=\bm0$ on $\mathcal{F}_h^{I,D}$, and apply the Cauchy-Schwarz inequality to get
\begin{align*}
     ( \boldsymbol{\sigma}_h, \boldsymbol{\nabla}\boldsymbol{v} )_{\mathcal{T}_h} 
     & = - ( \nabla \cdot \boldsymbol{\sigma}_h, \boldsymbol{v})_{\mathcal{T}_h} 
    +   \langle [[\boldsymbol{\sigma}_h\boldsymbol{n}]], \boldsymbol{v} \rangle_{\mathcal{F}_h^{I,N}} \\
    & \lesssim \left\| \nabla_h \cdot \boldsymbol{\sigma}_h \right\|_{\bm L^2(\Omega)} \left\| \boldsymbol{v} \right\|_{\bm L^2(\Omega)} 
+  \left\| \ \gamma^{\frac12}[[\boldsymbol{\sigma}_h\boldsymbol{n}]] \ \right\|_{\mathcal{F}_h^{I,N}} \left\| \ \gamma^{-\frac12}\boldsymbol{v} \ \right\|_{\mathcal{F}_h^{I,N}}. 
\end{align*}
Recalling the continuous local trace inequality (see \cite[Lemma 1.49.]{DiPietro2012}), which gives
\begin{align*}
    \sum_{F\in \mathcal{F}_h^{I,N}} \left\| \ \gamma^{-\frac12}\boldsymbol{v} \ \right\|_{\bm{L}^2(F)}^2
    \lesssim
    \sum_{\kappa\in \mathcal{T}_h} \left\| \ \gamma^{-\frac12}\boldsymbol{v} \ \right\|_{\bm{L}^2(\partial\kappa)}^2
    \lesssim 
    \sum_{\kappa\in \mathcal{T}_h} \Big(
    \left\| \boldsymbol{v} \right\|_{\bm L^2(\kappa)}^2 + h_\kappa^2 \ \left\| \boldsymbol{\nabla v} \right\|_{\bm L^2(\kappa)}^2 \Big),
\end{align*}
it is inferred that
\begin{align*}
     (\boldsymbol{\sigma}_h ,\boldsymbol{\nabla}\boldsymbol{v})_{\mathcal{T}_h} \lesssim
     \Big(  \left\| \nabla_h \cdot \boldsymbol{\sigma}_h \right\|_{\bm L^2(\Omega)} + \left\| \ \gamma^{\frac12}[[\boldsymbol{\sigma}_h\boldsymbol{n}]] \ \right\|_{\mathcal{F}_h^{I,N}} \Big) \left\| \boldsymbol{v} \right\|_{\bm H^1(\Omega)}.
\end{align*}
Plugging the above estimate into \eqref{eq:trace_l2}, we obtain
\begin{align*}
\left\|\rm{tr}(\boldsymbol{\sigma}_h)\right\|_{L^2(\Omega)}
\lesssim  \left\|\mu^{-\frac12}\boldsymbol{{\rm dev}}(\boldsymbol{\sigma}_h)\right\|_{\bm L^{2}(\Omega)}  + \left\| \nabla_h \cdot \boldsymbol{\sigma}_h \right\|_{\bm L^2(\Omega)}
+ \left\| \ \gamma^{\frac12}[[\boldsymbol{\sigma}_h\boldsymbol{n}]] \ \right\|_{\mathcal{F}_h^{I,N}}
\end{align*}
and, as a result of $\boldsymbol{\sigma}_h = \rm{ \textbf{dev}}(\boldsymbol{\sigma}_h) + d^{-1}\rm{tr}(\boldsymbol{\sigma}_h)\mathbb{I}_d$ and the definition of the $|\cdot|_{dG}$ seminorm, the thesis follows.
\end{proof}

\section{Aknowledgements}
This work is funded
by the European Union (ERC SyG, NEMESIS, project number 101115663). Views and opinions expressed are however
those of the authors only and do not necessarily reflect those of the European Union or the European Research Council Executive Agency.
P.F.A. and I.M. have been partially funded by ICSC—Centro Nazionale di Ricerca in High Performance Computing, Big Data, and Quantum Computing funded by European Union—NextGenerationEU.
M.B., I.M., and P.F.A. are members of INdAM-GNCS. 
The work of M.B. has been partially supported by the INdAM-GNCS project CUP E53C23001670001.
The work of I.M. has been partially supported by the INdAM-GNCS project CUP E53C22001930001.
The present research is part of the activities of "Dipartimento di Eccellenza 2023-2027".


\end{document}